\newtheorem{thm}{Theorem}[section]
\newtheorem{lem}[thm]{Lemma}
\newtheorem{prop}[thm]{Proposition}
\theoremstyle{definition}
\newtheorem{defn}[thm]{Definition}
\theoremstyle{remark}
\newtheorem{rem}[thm]{Remark}
\numberwithin{equation}{section}
\newcommand{\norm}[1]{\left\Vert#1\right\Vert}
\newcommand{\abs}[1]{\left\vert#1\right\vert}
\newcommand{\Real}{\mathbb R}
\newcommand{\eps}{\varepsilon}
\newcommand{\para}[1]{\left(#1\right)}
\newcommand{\inp}[1]{\left\langle#1\right\rangle}
\title{ Large Deviation Principle for Mild Solutions of Stochastic Evolution Equations with  Multiplicative L\'{e}vy Noise}%
\author{{Hassan Dadashi}}
\begin{document}

\date{}%

\maketitle
\begin{center}
\emph{Department of Mathematics, IASBS, Zanjan, Iran}
\end{center}
\let\thefootnote\relax\footnotetext{
\hspace{-.5cm}\textbf{Subject Class:}{ 60F10, 60H20}\\
\textbf{Keywords:}{ Stochastic evolution equations, mild solution, monotone nonlinearity, 
large deviation principle, weak convergence method, Poisson random measure}}

\begin{abstract}
We demonstrate the large deviation principle in the small noise limit for the mild solution of stochastic evolution equations with monotone nonlinearity. A recently developed method, weak convergent method,  has been employed in studying the large deviations. we have used essentially  the main result of \cite{kn:BCD} which discloses the variational representation of exponential integrals w.r.t. the  L\'{e}vy noise.  An It\^{o}-type inequality is a main tool in our proofs. Our framework covers a wide range of semilinear parabolic, hyperbolic and delay differential equations. We give some examples to illustrate the applications of the results.
\end{abstract}
\section{\textbf{Introduction}}
\begin{table}[b]
\begin{tabular}{l}
 \hline

\end{tabular}
\end{table}
Partial differential equations are the main framework in modelling the dynamics in physics and many other fields. In particular, to account the undetermined dynamics, stochastic partial differential equations (SPDEs) have been developed in recent decades.  
In studying SPDEs, there are three main approaches: the semigroup, the variational and the martingale measure approach. For the comprehensive references to these approaches, see \cite{kn:DPZ}, \cite{kn:Wa} and \cite{kn:KR}, respectively. In studying some equations like wave equation, it is more natural to consider it in the semigroup approach. Actually one advantages of semigroup approach, which is the framework taken into account in this paper, is that it gives a unified treatment for a wide class of semilinear parabolic, hyperbolic and functional differential equations. 
This approach has been extensively developed in recent decades. See \cite{kn:DPZ} and the references therein for a comprehensive review of progress in this approach. In this paper we investigate an asymptotic property of solution named large deviation principle (LPD).

LDP was initially studied by H. Cram\'{e}r \cite{kn:C} for a sequence of discrete random variables when he was estimating the ruin probability of an insurance company. Then, this notion was developed in more abstract setting. A unified framework in defining large deviation theory for a sequence of measures was developed by S. R. S. Varadhan \cite{kn:Va}, given as follows.
\begin{defn}
Let $(\Omega, \mathcal{F}, \textbf{P})$ be a probability space. The family of random variables $\{X^\eps, \eps>0\}$ on a Polish space (a separable complete metric space) $\mathcal{E}$ is said to satisfy the LDP with the good \emph{rate function} (or \emph{action functional}) $I$, where $\emph{I}:\mathcal{E}\rightarrow [0,\infty]$ is a lower semicontinuous function, if\\

$A1)$ for every $N< \infty$, the level set
$$K_N:=\{x\in \mathcal{E} : \emph{I}(x)\leq N\},$$ 
is compact.\\

$A2)$ for any open set $G\in \mathcal{E}$, the following upper bound satisfies
\begin{align}
\limsup\limits_{\eps\rightarrow0} \eps^2 \log \textbf{P}(X^{\eps}\in G)\leq -\inf\limits_{x\in G}\emph{I}(x);\nonumber
\end{align}

$A3)$ for any closed set $F\in \mathcal{E}$, the following lower bound satisfies
\begin{align}
\liminf\limits_{\eps\rightarrow0} \eps^2 \log \textbf{P}(X^{\eps}\in F)\geq -\inf\limits_{x\in F}\emph{I}(x).\nonumber
\end{align}
\end{defn}

The theory of large deviations, which has been investigated for different systems in recent years, reveals important aspects of asymptotic dynamics. Principally it gives the probability of deviation from an equilibrium point. See \cite{kn:Hol} for a basic and \cite{kn:DZ, kn:FW, kn:OV} for a  comprehensive introductions to this topic. In recent years, particular attention has been paid to studying LDP for stochastic differential equations (cf. e.g. \cite{kn:Az, kn:Fr, kn:So, kn:Pe,  kn:RZZ}). 

There exist two distinct methods, the classical and the weak convergence method, in establishing   LDP for an SDE or SPDE with multiplicative noise. In the classical method  we should discretize the time horizon and freez the diffusion term on each interval and then use the Varadhan's contraction principle. In this method we should overcome many difficult inequalities for convolution integrals. In the weak convergence approach, which is the approach employed in this work, we should obtain some sort of continuity w.r.t. some control variables. We will clarify further this approach in the current and next section.  

Several authors have studied LDP for  infinite dimensional SDEs with L\'{e}vy noise. See \cite{kn:CR} for the classical and \cite{kn:BCD} for the weak convergence approach. 

Varadhan's and Bryc's results, \cite{kn:Va} and \cite{kn:B}, announced an equivalence between LDP and Laplace principle (LP), which notices the expectations of exponential functions. 

\begin{defn} (\textbf{Laplace principle}) The family of random variables $\{X^\eps\}$, defined on the Polish space $\mathcal{E}$, is said to satisfy the \emph{Laplace principle} with the \emph{rate function I} if for all bounded continuous functions $h:\mathcal{E}\rightarrow\Real$, 
$$\lim_{\eps\rightarrow0}\eps^2 \log \mathbb{E}\;{\exp\left\{-\frac{1}{\eps^2}h(X^\eps)\right\}}=-\inf\limits_{f\in \mathcal{E}}\{h(f)+\emph{I}(f)\}.$$
\end{defn}
Another display of variational representation in evaluating the exponential integrals is in the following proposition which is a cornerstone of weak convergence method. See \cite{kn:DE} for a comprehensive introduction to applications of weak convergence method to the theory of large deviations. 
\begin{prop}
Let $(\mathcal{V}, \mathcal{A})$ be a measurable space and $f$ be a bounded measurable function mapping V into the real numbers $\mathbb{R}$. For a given probability measure $\theta$ on $\mathcal{V}$, we have the following representation 
$$-\log \int_{\mathcal{V}}e^{-f}d\theta=\inf_{\gamma\in \mathcal{P}(\mathcal{V})}\{\mathcal{R}(\gamma||\theta)+\int_{\mathcal{V}}fd\gamma\},$$
where $\mathcal{R}(\gamma||\theta):=\int_{\Omega}\log \frac{d\gamma}{d\theta}(\omega)\gamma(\omega)$. 
\end{prop}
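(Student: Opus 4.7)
The plan is to prove this classical variational identity (the Gibbs variational principle, or Donsker--Varadhan duality) by exhibiting the unique minimizer explicitly and then showing it attains the claimed value while every competitor gives something larger. The natural candidate is the tilted (Gibbs) measure $\gamma^{*}$ defined by its Radon--Nikodym derivative
$$\frac{d\gamma^{*}}{d\theta} = \frac{e^{-f}}{Z}, \qquad Z := \int_{\mathcal{V}} e^{-f}\,d\theta.$$
Boundedness of $f$ is used here precisely to ensure $Z\in(0,\infty)$, so that $\gamma^{*}$ is a bona fide probability measure equivalent to $\theta$.

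First, I would verify the ``$\leq$'' direction by a direct substitution: plugging $\gamma^{*}$ into the right-hand side, the definition of relative entropy gives $\log(d\gamma^{*}/d\theta)=-f-\log Z$, and after integrating against $\gamma^{*}$ the $\pm f$ terms cancel to leave exactly $-\log Z$. Hence the infimum is bounded above by $-\log\int_{\mathcal{V}}e^{-f}d\theta$.

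For the reverse inequality, I would take an arbitrary $\gamma\in\mathcal{P}(\mathcal{V})$. The case $\gamma\not\ll\theta$ is trivial since $\mathcal{R}(\gamma\|\theta)=+\infty$. When $\gamma\ll\theta$, since $\gamma^{*}$ is equivalent to $\theta$, also $\gamma\ll\gamma^{*}$, and the chain rule together with taking logarithms produces the key algebraic identity
$$\mathcal{R}(\gamma\|\theta) + \int_{\mathcal{V}} f\, d\gamma \;=\; \mathcal{R}(\gamma\|\gamma^{*}) \;-\; \log Z.$$
Non-negativity of relative entropy, which itself follows from Jensen's inequality applied to the convex function $x\log x$ (or equivalently to $-\log$), then yields the lower bound $-\log Z$, with equality if and only if $\gamma=\gamma^{*}$.

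I do not anticipate any genuine obstacle: the statement is a textbook duality and the tilted-measure computation is canonical. The only points requiring care are the convention $0\log 0 = 0$ implicit in the definition of $\mathcal{R}$, the separate treatment of the singular case $\gamma\not\ll\theta$, and the observation that boundedness of $f$ makes the normalizing constant $Z$ strictly positive and finite, so that all the manipulations above are rigorously justified without additional integrability hypotheses.
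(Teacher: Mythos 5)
Your proof is correct; note, however, that the paper itself offers no proof of this proposition — it is stated as a background fact and attributed to the Dupuis–Ellis reference \cite{kn:DE} — so there is no in-paper argument to compare against. Your argument via the Gibbs-tilted measure $d\gamma^*/d\theta = e^{-f}/Z$, the identity $\mathcal{R}(\gamma\|\theta)+\int f\,d\gamma = \mathcal{R}(\gamma\|\gamma^*)-\log Z$, and non-negativity of relative entropy is exactly the canonical proof one finds in that textbook, and all the hypotheses (boundedness of $f$ giving $Z\in(0,\infty)$, the split into $\gamma\ll\theta$ versus $\gamma\not\ll\theta$, the convention $0\log 0=0$) are handled correctly.
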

By using the above representation, Budhiraja and Dupuis \cite{kn:BD} obtained the following variational representation for the exponential integrals w.r.t. the Wiener process.  
\begin{align}
-\log \mathbb{E}\;\{\exp(-h(W))\}=\inf \limits_{u\in\mathcal{P}^2(U)}\mathbb{E}\;\para{\frac{1}{2}\displaystyle\int^T_0\norm{u(s)}^2_U ds+h\para{W+\displaystyle\int^._0u(s)ds}}\nonumber,
\end{align}
in which, $U$ is a Hilbert space, $h:\mathcal{C}([0,T];U)\rightarrow \Real$ is a bounded and Borel measurable function, $W(t)$ is a cylindrical Brownian motion and $\mathcal{P}^2(U)$ is the family of all predictable processes on $U$. This representation together with Laplace principle gives a different method in obtaining LDP for SDEs with Gaussian noise. The corresponding representation for L\'{e}vy noise, obtained in \cite{kn:BCD}, is being expressed  in the next section.

The aim of this work is to apply the weak convergence approach to establish LDP for the mild solution (Definition \ref{mildsol}) of equation
\begin{align}
dX_t=[AX_t+f(X_t)]dt+\int_{\mathbf{X}} G(X_t)\tilde{N}(dvdt),\label{mainlevy}
\end{align}
where $A$ is a linear differential operator, $f$ is a monotone demicontinuous function and $\tilde{N}$ is a compensated Poisson random measure with state space $\mathbf{X}$. This equation gives a suitable setting in studying many dynamical systems. We give some examples in the last section.

In the next section we give some preliminaries and assumptions. In the third section, we show the main results and establish the LDP in three subsections. Finally, in the last section we give some examples and applications.



\section{\textbf{Preliminaries and Main Result}}

\subsection{Poisson Random Measure and a Variational Representation}

We choose a locally compact Polish space $\mathbf{X}$ as the state space of jumps. In practice, a subset of $\mathbb{R}^n$ can be chosen as $\mathbf{X}$. Let $\mathcal{M}_{FC}(\mathbf{X})$ be the space of all measures $\nu$ on $(\mathbf{X}, \mathcal{B}(\mathbf{X}))$ such that $\nu(K)<\infty$ for every compact subset $K\subset \mathbf{X}$. Consider the weakest topology on $\mathcal{M}_{FC}(\mathbf{X})$ such that for every $f\in \mathcal{C}_c(\mathbf{X})$ (the space of all continuous function with compact support), the function $\nu\rightarrow \inp{f,\nu}=\int_{\mathbf{X}}f(u)d\nu(u)$ is continuous on $\mathcal{M}_{FC}(\mathbf{X})$. To consider the time horizon let $\mathbf{X}_T=[0,T]\times\mathbf{X}$ and for a measure $\nu\in\mathcal{M}_{FC}(\mathbf{X})$ let $\nu_T=\lambda_T\times\nu$, where $\lambda_T$ is the Lebesgue measure on $[0,T]$. 

\begin{defn}\textbf{(Poisson Random Measure)}

A Poisson random measure $N$ on the space $\mathbf{X}_T$ with intensity measure $\nu_T$ is an $\mathcal{M}_{FC}(\mathbf{X}_T)$-valued random variable such that for every $B\in\mathcal{B}(\mathbf{X}_T)$ with $\nu_T(B)<\infty$, $N(B)$ is Poisson random variable with intensity $\nu_T(B)$ and for disjoint $B_1,\cdots, B_k\in \mathcal{B}(\mathbf{X}_T)$, the random variables $N(B_1), \cdots, N(B_k)$ are mutually independent.
\end{defn}

Let $\mathbb{P}$ be the measure induced by $N$ on $\mathbb{M}:=\mathcal{M}_{FC}(\mathbf{X}_T)$. We recall that $\mathbb{P}$ is the unique probability measure on this space such that the canonical map $N:\mathbb{M}\rightarrow \mathbb{M}$, $N(m):=m$ is a Poisson random measure with intensity measure $\nu_T$.

Now, we give some preliminaries to state the variational representation of exponential integrals w.r.t. Poisson random measure $N$. In weak convergence method, control functions play a crucial role in establishing the variational representations. In the Wiener noise case, the absolutely continuous functions have been chosen as the control functions, see \cite{kn:BD}. Clearly, in the L\'{e}vy noise case, this is not an appropriate choice and it seems that a variation on the intensity measure should be possible by the control functions. 

To change the jump intensity at time $t\geq 0$ and jump vector $x\in \mathbf{X}$, we multiply the intensity measure $\nu_T$ over the space $\mathbf{X}_T$ by a nonnegative function $g:\mathbf{X}_T\rightarrow [0,\infty)$. To express the modified measure as a new Poisson random measure, we can consider the space $\mathbf{Y}=\mathbf{X}\times [0,\infty)$  as new state space.  Then letting $\mathbf{Y}_T=[0,T]\times \mathbf{Y}$ and $\bar{\mathbb{M}}=\mathcal{M}_{FC}(\mathbf{Y}_T)$, there is a unique probability measure $\bar{\mathbb{P}}$ on the space $(\bar{\mathbb{M}}, \mathcal{B}(\bar{\mathbb{M}}))$ under which the canonical map, $\bar{N}:\bar{\mathbb{M}}\rightarrow \bar{\mathbb{M}}$, $\bar{N}(m):=m$ is a Poisson random measure with intensity measure $\bar{\nu}_T=\lambda_T\otimes \nu\otimes\lambda_{\infty}$, where $\lambda_{\infty}$ is the Lebesgue measure on $[0,\infty)$. The corresponding expectation operator will be denoted by $\bar{\mathbb{E}}$. 

Let $\mathcal{F}_t:=\sigma\{\bar{N}((0,t]\times A):0\leq s\leq t, A\in\mathcal{B}(\mathbf{Y})\}$ and $\bar{\mathcal{F}}_t$ be its completion under $\bar{\mathbb{P}}$. We denote by $\bar{\mathcal{P}}$ the predictable $\sigma$-field on $[0,T]\times\bar{\mathbb{M}}$ with the filtration $\{\bar{\mathcal{F}}_t, 0\leq t\leq T\}$. We consider all $(\mathcal{B}(\mathbf{X})\otimes\bar{\mathcal{P}})/\mathcal{B}[0,\infty)$-measurable maps $g:\mathbf{X}_T\times\bar{\mathbb{M}}\rightarrow[0,\infty)$ as control functions and denote this class by $\bar{\mathcal{A}}$. Therefore assuming $(\bar{\mathbb{M}},\mathcal{B}(\bar{\mathbb{M}}))$ as the probability space, a control function alters randomly the intensity at the point $(t,x)\in \mathbf{X}_T$, in a predictable way. 

For an arbitrary $g\in\bar{\mathcal{A}}$, we define the new counting process $N^{g}$ on the space $\mathbf{X}_T$ by 
\begin{align}
N^{g}((0,t)\times U)=\int_{(0,t]\times U}\int_{(0,\infty)}1_{[0,g(s,x)]}(r)\bar{N}(dsdxdr),\qquad t\in[0,T], U\in \mathcal{B}(\mathbf{X}).\nonumber
\end{align}
So, $N^{g}$ is a modified version of $N$ in which $g$ alters the intensity at the point $x$ and time $t$. When $g(a,x,\bar{m})=\theta$ is constant, we write $N^{g}=N^{\theta}$ and the corresponding expectation operator will be denoted by $\mathbb{E}^{\theta}$. 

Define $l:[0,\infty)\rightarrow [0,\infty)$ by
$$l(r)=r\log r-r+1, \qquad r\in [0,\infty).$$
We consider for any $g\in\bar{\mathcal{A}}$ the quantity 
$$L_T(g)=\int_{\mathbf{X}_T}l(g(t,x,\omega))\nu_T(dtdx),$$
as a $[0,\infty]$-valued random variable. In the Wiener noise model, the corresponding quantity  is  defined, see in \cite{kn:BD}, by the energy of the absolutely continuous control functions.

The following representation is the essential tool in establishing LDP in the L\'{e}vy noise case and was proved in \cite{kn:BDM}.

\begin{thm}
Let $F\in \mathcal{M}_b(\mathbb{M})$ be a bounded measurable function on $\mathbb{M}$. Then
\begin{align}
-\log\mathbb{E}^{\theta}(e^{-F(N)})=-\log\bar{\mathbb{E}}(e^{-F(N^{\theta})})=\inf_{g\in\bar{\mathcal{A}}}\bar{\mathbb{E}}[\theta
 L_T(g)+F(N^{\theta g})].\label{levyrep}
\end{align}

\end{thm}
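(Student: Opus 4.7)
The plan is to prove the two equalities of \eqref{levyrep} in turn. The first identity $\mathbb{E}^{\theta}(e^{-F(N)})=\bar{\mathbb{E}}(e^{-F(N^{\theta})})$ is a direct consequence of the classical thinning theorem for Poisson point processes: applied to $\bar{N}$ on the augmented space $\mathbf{Y}_T$, retaining only the atoms whose auxiliary coordinate $r$ falls in $[0,\theta]$ yields a Poisson random measure on $\mathbf{X}_T$ with intensity $\theta\nu_T$, so that $N^{\theta}$ under $\bar{\mathbb{P}}$ has the same law as $N$ under $\mathbb{P}^{\theta}$. For the variational identity, I would apply the general relative-entropy formula (the Proposition stated above) on $(\bar{\mathbb{M}},\mathcal{B}(\bar{\mathbb{M}}),\bar{\mathbb{P}})$ with the bounded function $F\circ N^{\theta}$, obtaining
\[
-\log\bar{\mathbb{E}}(e^{-F(N^{\theta})})=\inf_{\mathbb{Q}\ll\bar{\mathbb{P}}}\bigl\{\mathcal{R}(\mathbb{Q}||\bar{\mathbb{P}})+\mathbb{E}_{\mathbb{Q}}[F(N^{\theta})]\bigr\},
\]
and then reparametrize the inner minimization in terms of predictable controls $g\in\bar{\mathcal{A}}$.

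The bridge between the two parametrizations is a Girsanov change of measure for Poisson random measures on $\mathbf{Y}_T$: to $g\in\bar{\mathcal{A}}$ I would associate the measure $\mathbb{Q}^{g}$ whose density with respect to $\bar{\mathbb{P}}$ is the Dol\'eans exponential that rescales the vertical component of the intensity of $\bar{N}$ so that thinning at constant threshold $\theta$ under $\mathbb{Q}^{g}$ reproduces the same law that thinning at threshold $\theta g$ has under $\bar{\mathbb{P}}$. This handles the upper bound $(\leq)$: a direct computation with the Dol\'eans density gives on the one hand $\mathbb{E}_{\mathbb{Q}^{g}}[F(N^{\theta})]=\bar{\mathbb{E}}[F(N^{\theta g})]$ by the thinning construction, and on the other hand $\mathcal{R}(\mathbb{Q}^{g}||\bar{\mathbb{P}})=\bar{\mathbb{E}}[\theta L_T(g)]$ by the explicit form of $l(r)=r\log r-r+1$ as the pointwise entropy cost of a Poisson intensity change. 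Inserting $\mathbb{Q}=\mathbb{Q}^{g}$ into the general variational formula yields the $\leq$ inequality for bounded $g$, and a truncation $g_n=g\wedge n$ combined with Fatou's lemma extends it to all $g\in\bar{\mathcal{A}}$.

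The reverse inequality is where I expect the main obstacle. Starting from an arbitrary $\mathbb{Q}\ll\bar{\mathbb{P}}$ of finite relative entropy, I would use the martingale representation theorem for purely discontinuous martingales driven by the compensated $\bar{N}$ to write $d\mathbb{Q}/d\bar{\mathbb{P}}$ as a Dol\'eans exponential with a predictable integrand $\phi(t,x,r)$, and then read off the control $g$ from the vertical profile of $\phi$ on the strip $\{r\in[0,\theta]\}$. Two delicate points then need care: verifying that the recovered $g$ belongs to $\bar{\mathcal{A}}$ with the required measurability and nonnegativity, and establishing the entropy bound $\mathcal{R}(\mathbb{Q}||\bar{\mathbb{P}})\geq\bar{\mathbb{E}}[\theta L_T(g)]$ by Jensen's inequality applied to the convex function $l$ acting on the vertical average of $\phi$. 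Together with a lower-semicontinuity/tightness argument for $L_T$ that controls passage to a limit in an optimizing sequence and rules out mass escaping to infinity, these estimates close the $\geq$ direction and establish \eqref{levyrep}.
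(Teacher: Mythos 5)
The paper does not prove this theorem: immediately before the statement it says ``was proved in \cite{kn:BDM}'', and the abstract attributes the representation to \cite{kn:BCD}. There is therefore no in-paper argument to compare your outline against; the theorem is taken as a black box. Judged on its own terms, your roadmap does match the strategy used in those references (Girsanov/Dol\'eans change of measure plus the abstract relative-entropy formula, the thinning identity for the first equality), but two of the steps you compress would not survive closer scrutiny as written.

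First, for the reverse inequality you cannot simply ``read off $g$ from the vertical profile of $\phi$''. The controls in $\bar{\mathcal{A}}$ are maps on $\mathbf{X}_T\times\bar{\mathbb{M}}$, not on $\mathbf{Y}_T\times\bar{\mathbb{M}}$, so the $r$-dependence of the Dol\'eans integrand must be removed; Jensen's inequality applied to the convex cost $l$ over the $r$-average on $[0,\theta]$ does this, but you must also argue that the contribution to $\mathcal{R}(\mathbb{Q}\,||\,\bar{\mathbb{P}})$ from the strip $r>\theta$ is nonnegative and can be dropped, and that the resulting averaged integrand lands in $\bar{\mathcal{A}}$ with the required $(\mathcal{B}(\mathbf{X})\otimes\bar{\mathcal{P}})$-measurability. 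You also need the predictable representation property for the filtration generated by $\bar N$ to justify that every $\mathbb{Q}\ll\bar{\mathbb{P}}$ of finite relative entropy gives rise to such a Dol\'eans exponential in the first place; this is not automatic. Second, in the upper bound the truncation $g_n=g\wedge n$ does not close as easily as ``Fatou'': while $\theta L_T(g_n)\to\theta L_T(g)$ by monotone convergence, $F$ is only bounded \emph{measurable}, not continuous on $\mathbb{M}$, so the a.s.\ convergence $N^{\theta g_n}\to N^{\theta g}$ in $\mathbb{M}$ does not by itself give $\bar{\mathbb{E}}[F(N^{\theta g_n})]\to\bar{\mathbb{E}}[F(N^{\theta g})]$. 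These are precisely the points that occupy most of the space in the published proof, and your outline correctly flags them as delicate but does not supply the arguments.
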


\subsection{A General Large Deviation Principle}

Now, we write the required condition for a general family of random variables $\{X^{\eps}=\mathcal{G}^{\eps}(\eps N^{\eps^{-1}}), \eps>0\}$ to satisfy LDP, where  $\{\mathcal{G^{\eps}}, \eps>0\}$ is a family of measurable maps from $\mathbb{M}$ to a Polish space $\mathbb{U}$. Consider the set 
$$S^N=\{g:\mathbf{X}_T\rightarrow [0,\infty) :L_T(g)\leq N \}.$$ 
We can identify an arbitrary $g\in S^N$ by a measure $\nu^g_T\in \mathbb{M}$, defined by 
$$\nu^g_T(A)=\int_Ag(s,x)\nu_T(dsdx), \qquad A\in\mathcal{B}(\mathbb{X}_T).$$
This identification induces a topology on $S^N$, under which $S^N$ is a compact space. See \cite[Section 5.1]{kn:BDM} for a proof of this statement. We use this topology on $S^N$ in the sequel. Define $\mathbb{S}=\cup _{N\geq 1}S^N$ and let 
$$\mathcal{U}^N=\{g\in \bar{\mathcal{A}}: g(\omega)\in S^N  \bar{\mathbb{P}}\; a.e. \}. $$
The following conditions will be sufficient to establish LDP for the family $\{X^{\eps}=\mathcal{G}^{\eps}(\eps N^{\eps^{-1}}), \eps>0\}$. \\

\textbf{Hypothesis 1} 
There are measurable maps $\mathcal{G}^0, \mathcal{G}^{\eps}:\mathbb{M}\rightarrow \mathbb{U}$  that  satisfy the following conditions. 

\begin{description}
\item (i) Let $g^n, g\in S^N$ be such that $g^n\rightarrow g$ as $n\rightarrow \infty$. Then
$$\mathcal{G}^0(\nu^{g^n}_T)\rightarrow \mathcal{G}^0(\nu^g_T).$$
\item (ii) Let $g^{\eps}, g\in \mathcal{U}^N$ be such that $g^{\eps}$ converges in distribution (as random variables) to $g$. Then 
$$\mathcal{G}^{\eps}(\eps N^{\eps^{-1}}g^{\eps})\rightarrow \mathcal{G}^0(\nu^g_T).$$

\end{description}

The first condition represents the continuity of deterministic system w.r.t. the control functions. The second condition is a law of large numbers, when the intensity of control functions tends to 
zero. 

To determine the rate function, let $\mathbb{S}_{\phi}=\{g\in\mathbb{S}: \phi=\mathcal{G}^0(\nu^g_T)\}$. Define the rate function $I:\mathbb{U}\rightarrow [0,\infty]$ by 
\begin{align}
I(\phi)=\inf_{g\in\mathbb{S}_{\phi}}\{L_T(g)\}, \qquad \phi\in \mathbb{U},\label{rate}
\end{align}

and by convention let $I(\phi)=\infty$ if $\mathbb{S}_{\phi}=\varnothing$.\\

The following theorem, obtained in \cite{kn:BDM}, is our main reference in establishing LDP. 

\begin{thm}\label{mainthm}
For $\eps>0$ let $X^{\eps}$ be defined by $X^{\eps}=\mathcal{G}^{\eps}(\eps N^{\eps^{-1}})$, and suppose that Hypothesis 1 holds. Then the function $I$, defined as in (\ref{rate}), is a rate function on $\mathbb{U}$ and the family $\{X^{\eps}, \eps>0\}$ satisfies a large deviation principle on $\mathbb{U}$ with rate function $I$. 
\end{thm}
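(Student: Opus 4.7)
The plan is to reduce the LDP to the equivalent Laplace principle (via Varadhan--Bryc) and then prove the Laplace principle directly from the variational representation (\ref{levyrep}). For a bounded continuous $h:\mathbb{U}\To\Real$, substituting $\theta=\eps^{-1}$ and $F(m)=\eps^{-1}h(\mathcal{G}^{\eps}(\eps m))$ in (\ref{levyrep}) and multiplying by $\eps$ yields
\begin{align*}
-\eps\log\mathbb{E}\bigl[e^{-\eps^{-1}h(X^{\eps})}\bigr]
=\inf_{g\in\bar{\mathcal{A}}}\bar{\mathbb{E}}\bigl[L_T(g)+h(\mathcal{G}^{\eps}(\eps N^{\eps^{-1}g}))\bigr],
\end{align*}
so the task is to show that, as $\eps\to 0$, this infimum matches $\inf_{\phi\in\mathbb{U}}\{h(\phi)+I(\phi)\}$.

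First I would check that $I$ is a good rate function. Both lower semicontinuity and compactness of the level sets $\{I\leq N\}$ follow from Hypothesis 1(i), compactness of $S^N$ in the topology described after the definition of $\mathcal{U}^N$, and lower semicontinuity of $L_T$ on $S^N$ (a consequence of convexity and superlinearity of $l$). Indeed, if $I(\phi_n)\leq N$ and $\phi_n\to\phi$, near-minimizers $g_n\in S^N$ with $\mathcal{G}^0(\nu^{g_n}_T)=\phi_n$ admit a subsequence converging in $S^N$ to some $g$; Hypothesis 1(i) forces $\mathcal{G}^0(\nu^g_T)=\phi$, and lower semicontinuity of $L_T$ gives $L_T(g)\leq N$, hence $\phi\in\{I\leq N\}$.

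For the Laplace upper bound, given $\phi$ with $I(\phi)<\infty$ and $\delta>0$, choose $g\in\mathbb{S}_{\phi}$ with $L_T(g)\leq I(\phi)+\delta$ and use it as a deterministic (hence predictable) control in the representation. Hypothesis 1(ii), applied with $g^{\eps}\equiv g$, combined with bounded convergence, yields $\bar{\mathbb{E}}[h(\mathcal{G}^{\eps}(\eps N^{\eps^{-1}g}))]\to h(\phi)$, so
$$\limsup_{\eps\to 0}\bigl[-\eps\log\mathbb{E}(e^{-\eps^{-1}h(X^{\eps})})\bigr]\leq L_T(g)+h(\phi)\leq I(\phi)+h(\phi)+\delta;$$
taking the infimum over $\phi$ and letting $\delta\to 0$ delivers the bound.

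The lower bound is the principal obstacle. Choose nearly optimal $g^{\eps}\in\bar{\mathcal{A}}$ in the representation. Boundedness of $h$ forces $\bar{\mathbb{E}}[L_T(g^{\eps})]\leq\norm{h}_{\infty}+o(1)$, but to invoke compactness one needs the pathwise constraint $g^{\eps}\in\mathcal{U}^N$ for a fixed $N$; this is obtained by a standard truncation of $g^{\eps}$ outside a large sublevel set of $L_T$, at a cost that vanishes in the limit, and is the technically delicate step. Along a subsequence $g^{\eps}$ then converges in distribution to some $g\in S^N$; Hypothesis 1(ii) gives $\mathcal{G}^{\eps}(\eps N^{\eps^{-1}g^{\eps}})\Rightarrow\mathcal{G}^0(\nu^g_T)$, and Fatou's lemma together with the lower semicontinuity of $L_T$ under weak convergence of controls (the payoff of the compact topology on $S^N$) yield
$$\liminf_{\eps\to 0}\bigl[-\eps\log\mathbb{E}(e^{-\eps^{-1}h(X^{\eps})})\bigr]\geq\bar{\mathbb{E}}\bigl[L_T(g)+h(\mathcal{G}^0(\nu^g_T))\bigr]\geq\inf_{\phi}\{h(\phi)+I(\phi)\}.$$
Combining the two bounds establishes the Laplace principle, and hence the LDP with rate function $I$.
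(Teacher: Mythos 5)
The paper itself does not prove this theorem: immediately after stating it, the text says it was ``obtained in \cite{kn:BDM}'' and treats it as a black-box reference. So there is no in-paper proof to compare against; what you have written is essentially a reconstruction of the Budhiraja--Dupuis--Maroulas argument, and it is the right one. The reduction to the Laplace principle, the substitution $\theta=\eps^{-1}$, $F=\eps^{-1}\,h\circ\mathcal{G}^{\eps}(\eps\,\cdot)$ into the representation (\ref{levyrep}), the upper bound via a near-optimal deterministic control fed into Hypothesis 1(ii), and the lower bound via truncation of near-optimal random controls into a fixed $\mathcal{U}^N$ followed by tightness, Hypothesis 1(ii), and Portmanteau/lower semicontinuity of $L_T$, are exactly the skeleton of the BDM proof. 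You also correctly flag the one genuinely delicate point, the replacement of controls with bounded expected cost by controls pathwise in $\mathcal{U}^N$; in BDM this is a standalone lemma and deserves its own statement if you were to write this out fully.

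Two small imprecisions, neither fatal. In the goodness-of-$I$ argument, near-minimizers for $\phi_n$ with $I(\phi_n)\leq N$ live in $S^{N+\delta_n}$, not $S^N$; one should work in $S^{N+1}$ (say) and let $\delta_n\to 0$ at the end, using lower semicontinuity of $L_T$ to land in $S^N$. Also, the speed in your Laplace identity is $\eps$, which is the correct one for the scaling $X^{\eps}=\mathcal{G}^{\eps}(\eps N^{\eps^{-1}})$; this does not match the $\eps^2$ appearing in the paper's Definitions 1.1 and 1.2, but that is an inconsistency in the paper's exposition (which uses the Brownian convention there) rather than in your argument.
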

\subsection{A Class of SPDE}

In studying SPDEs, the semigroup approach is been considered as an interesting approach which involves a wide range of semilinear SPDEs. In this approach, it is assumed that the solution at any time lies in a Hilbert space $H$. The typical semilinear  equation with Gaussian noise has the following form 
 
\begin{align}
dX_t=[AX_t+F(X_t)]dt+G(X_t)dW_t,\label{semigroupeq}
\end{align}
in which $A$ is a (possibly unbounded) differential operator that generates a strongly continuous semigroup $S(t), t\geq 0$ on $H$. In this paper, we consider a semilinear equation with L\'{e}vy noise, which takes the following form. 
\begin{align}
dX_t=[AX_t+f(X_t)]+\int_{\mathbf{X}}G(t,X_t,v)\tilde{N}(dvdt).\label{mainlevyeq}
\end{align}

We explore the mild solution of above equation over the interval $[0,T]$ in the space of $H$-valued right continuous with left limit functions (RCLL), which is denoted by $\mathcal{D}([0,T];H)$. We consider the uniform metric $\norm{f-g}_{\infty}=\sup_{0\leq t\leq T}\norm{f(t)-g(t)}$ on this space which makes it a complete space. Here $\norm{\cdot}$ denotes the norm in the space $H$. Although the space $\mathcal{D}([0,T];H)$ is usually endowed with the Skorokhod topology, which is a weaker topology, the solution in our setting  can be established in the uniform topology. 

\begin{defn}(\textbf{Mild Solution})\label{mildsol}
For the filtered probability space\\ 
$(\bar{\mathbb{M}}, \mathcal{B}(\bar{\mathbb{M}}), \bar{\mathbb{P}}, \{\bar{\mathcal{F}}\})$, introduced in subsection 2.1, let $X_t$ be a predictable process  with $\bar{\mathbb{P}}$ a.s. paths in the space $\mathcal{D}([0,T];H)$. Furthermore, let $X_0\in H$ be a $\bar{\mathcal{F}}_0$-measurable square integrable random variable, $\mathbb{E}\norm{X_0}^2<\infty$.  $X_t$ is the mild solution of Equation (\ref{mainlevyeq}) with initial value $X_0$, if it is the strong solution of the following integral equation
\begin{align}
X^\eps_t=S(t)X_0+\int^t_0S(t-s)f(X^\eps_s)ds+\int^t_0\int_{\mathbf{X}}S(t-s) G(s,X^\eps_s,v)\tilde{N}(dvds).\label{mildsol}
\end{align}
\end{defn}

The imposed assumptions on the linear operator $A$, the nonlinear drift coefficient $f:H\rightarrow H$ and the diffusion coefficient $G:[0,T]\times H\times \mathbf{X}\rightarrow H$ are as the followings. \\

\textbf{Hypothesis 2}
\begin{description}

  \item (i) $A:D(A)\rightarrow H$ is a closed linear operator with dense domain $D(A)\subseteq H$ which generates a $C_0$-semigroup $S(t)$ on $H$ and there exist $L, \lambda >0$ such that $\norm{S(t)}\leq L e^{\lambda t}$, for every $t\geq 0$;\\

  \item (ii) $f$ is a demicontinuous function on $H$, i.e., whenever $\{x_n\}$ converges strongly to $x$ in $H$, $\{f(x_n)\}$ converges weakly to $f(x)$ in $H$;\\

  \item (iii) $f$ satisfies a linear growth condition, i.e.
$$\exists C>0; s.t. \quad \norm{f(x)} \leq C(1+\norm{x}),\qquad \forall x\in H;$$

  \item (iv) $-f$ is semimonotone with parameter $M\geq 0$, i.e.
$$\inp{f(x)-f(y),x-y}\leq M\norm{x-y}^2 ,\qquad \forall x,y\in H;$$

  \item (v) The map $x\rightarrow G(t,x,\cdot)$ is uniformly Lipschitz continuous with Lipschitz constant $M$, i.e.
$$\int_{\mathbf{X}}\norm{G(t,x,v)-G(t,y,v)}^2\nu(dv) \leq M \norm{x-y}^2 ,\qquad \forall x,y\in H.$$
\end{description}

\begin{rem} Without loss of generality, we can assume in Hypothesis 2(i) that $\lambda=0$ and $L=1$. This means that $S(t)$ is a contraction. This simplifying assumption has been argued in \cite{kn:Z} in proving the existence of solution. 
\end{rem}

To obtain the large deviation result, we need some more assumptions on the diffusion coefficient.  Consider the following two norms on $G$: \\

$$\norm{G(t,v)}_0=\sup_{x\in H}\frac{\norm{G(t,x,v)}}{1+\norm{x}},$$

$$\norm{G(t,v)}_1=\sup_{x, y\in H}\frac{\norm{G(t,x,v)-G(t,y,v)}}{\norm{x-y}}.$$

\textbf{Hypothesis 3}
There exists $\delta >0$ such that for all Borel sets $E\in\mathcal{B}([0,T]\times \mathbf{X})$ satisfying $\nu_T(E)<\infty$, the followings hold 
\begin{align}
\int_{E}e^{\delta \norm{G(t,v)}^2_0}\nu(dv)dt<\infty, \label{expcon1}
\end{align}
\begin{align}
 \int_{E}e^{\delta \norm{G(t,v)}^2_1}\nu(dv)dt<\infty. \label{expcon2}
\end{align}

\begin{rem}
Under Hypothesis 3, the Estimates (\ref{expcon1}), (\ref{expcon2}) hold for every $\delta >0$ and all $E\in\mathcal{B}([0,T]\times \mathbf{X})$ satisfying $\nu_T(E)<\infty$.
\end{rem}

Many authors have been studied the existence, uniqueness and qualitative properties of mild solutions for different assumptions on the nonlinear coefficient $f$. By assuming monotonicity on $f$, which is weaker than the Lipschitz condition, we can treat a wider class of dynamics, see \cite{kn:FJ} for an important application. In \cite{kn:Z2, kn:Za}, Zangeneh  explored the mild solution of a more general semilinear stochastic evolution equation with monotone nonlinearity. He proved the existence, uniqueness, and some qualitative properties of the solution of the integral equation

\begin{align}
X_t=U(t,0)X_0+\int^t_0 U(t,s)f(s,X_s)ds+\int^t_0 U(t,s)g(s,X_s)dW_s+V(t),\nonumber
\end{align}
in which $V(t)$ is an arbitrary Gaussian process and the differential operator $A(t)$ is time dependent and therefore generates a two parameter semigroup $U(t,s), 0\leq s\leq t$. Some other properties of solution, including LDP and asymptotic stability, have been studied in \cite{kn:DZ, kn:JaZ, kn:ZZ}. For the L\'{e}vy noise case, the existence and uniqueness of mild solution of Equation (\ref{mainlevyeq}) with assumptions similar to Hypothesis 2 have been studied recently in \cite{kn:SZ}.  


In the following we give two inequalities, proved respectively in \cite{kn:Zan} and \cite{kn:Za}, which give upper bounds for the norm of convolution integrals and are being used in our arguments several times. The first inequality  considers the deterministic equations and the second one gives the corresponding inequality for stochastic equations.

\begin{prop}(\textbf{Energy-type inequality})
Let $a:[0,T]\rightarrow H$ be an integrable function. Suppose $A$ and $S$ satisfy Hypothesis 2(i). If
\begin{align}
X_t=S(t)X_0+\displaystyle\int^t_0S(t-s)a(s)ds,\nonumber
\end{align}
then
\begin{align}
\norm{X_t}^2\leq e^{2\lambda t}\norm{X_0}^2+2\displaystyle\int^t_0 e^{2\lambda(t-s)}\inp{X(s),a(s)}ds,\quad\forall t\in[0,T].\nonumber
\end{align}
\end{prop}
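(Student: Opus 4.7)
The plan is to reduce to the case of smooth data via approximation, prove the inequality for classical solutions by a direct differentiation plus Gronwall argument, and then pass to the limit.

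First, I would assume $X_0 \in D(A)$ and $a \in C^1([0,T];H)$. Under this regularity the variation-of-constants formula defines a classical (strong) solution, so $X_t \in D(A)$ for every $t$ and $\frac{d}{dt}X_t = AX_t + a(t)$ holds in $H$. Then
\begin{align*}
\frac{d}{dt}\norm{X_t}^2 = 2\inp{X_t, AX_t} + 2\inp{X_t, a(t)}.
\end{align*}
Next I would invoke Hypothesis 2(i): since $\norm{S(t)} \leq Le^{\lambda t}$, the rescaled semigroup $e^{-\lambda t}S(t)$ is (after the WLOG reduction $L=1$ noted in the Remark) a contraction, so by Lumer--Phillips its generator $A-\lambda I$ is dissipative, i.e. $\inp{Ax,x}\leq\lambda\norm{x}^2$ for all $x\in D(A)$. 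Substituting gives
\begin{align*}
\frac{d}{dt}\norm{X_t}^2 \leq 2\lambda\norm{X_t}^2 + 2\inp{X_t, a(t)}.
\end{align*}

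Now multiplying by the integrating factor $e^{-2\lambda t}$, one gets $\frac{d}{dt}\para{e^{-2\lambda t}\norm{X_t}^2}\leq 2e^{-2\lambda t}\inp{X_t,a(t)}$. Integrating from $0$ to $t$ and multiplying back by $e^{2\lambda t}$ yields exactly the claimed inequality in the smooth case.

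For general $X_0 \in H$ and $a \in L^1([0,T];H)$, I would approximate $X_0$ by $X_0^n \in D(A)$ with $X_0^n \to X_0$ in $H$, and $a$ by $a^n \in C^1([0,T];H)$ with $a^n \to a$ in $L^1([0,T];H)$. Letting $X^n_t$ denote the corresponding mild solutions, the bound $\norm{S(t)}\leq Le^{\lambda t}$ gives
\begin{align*}
\sup_{t\leq T}\norm{X^n_t - X_t} \leq Le^{\lambda T}\para{\norm{X_0^n-X_0}+\int_0^T\norm{a^n(s)-a(s)}\,ds} \longrightarrow 0.
\end{align*}
Passing to the limit in the already-established inequality $\norm{X^n_t}^2 \leq e^{2\lambda t}\norm{X_0^n}^2 + 2\int_0^t e^{2\lambda(t-s)}\inp{X^n_s, a^n(s)}\,ds$ then gives the result.

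The main technical obstacle is justifying the differentiation step when $a$ is only integrable, which is exactly what forces the approximation route. The limit step itself is routine but must be done carefully: the cross-term convergence
\begin{align*}
\inp{X^n_s, a^n(s)} - \inp{X_s, a(s)} = \inp{X^n_s - X_s, a^n(s)} + \inp{X_s, a^n(s)-a(s)}
\end{align*}
is controlled by uniform convergence of $X^n$ together with the $L^1$-boundedness of $a^n$ and $L^1$-convergence of $a^n$ to $a$, which suffices to pass to the limit inside the convolution integral.
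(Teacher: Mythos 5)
Your proof is correct. The paper itself does not prove this proposition---it is quoted from Zangeneh's 1998 paper \cite{kn:Zan}---so there is no in-paper argument to compare against, but your route (classical solution for smooth data, dissipativity of $A-\lambda I$ from $\norm{S(t)}\leq e^{\lambda t}$, integrating factor, then density of $D(A)$ in $H$ and of $C^1$ in $L^1$ to pass to general data) is the standard one. Two small remarks: the Lumer--Phillips invocation is heavier than needed, since the dissipativity estimate $\inp{Ax,x}\leq\lambda\norm{x}^2$ follows directly by differentiating $\norm{S(t)x}^2\leq e^{2\lambda t}\norm{x}^2$ at $t=0$ for $x\in D(A)$; and your argument, like the stated proposition (whose conclusion has no factor of $L$), really does require the normalization $L=1$, which the paper's Remark 2.7 treats as a standing assumption. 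An equivalent alternative, and the one more in keeping with the techniques used later in the paper, is to regularize the operator via its Yosida approximation $A_k$ rather than mollifying the data $(X_0,a)$; both routes reduce to the same smooth-case computation and limit passage.
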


\begin{prop}(\textbf{It\^{o}-type inequality})
Let $\{Z_t: \; t\in[0,T]\}$ be an $H$-valued, cadlag, locally square integrable semimartingale. Suppose $A$ and $S$ satisfy Hypothesis 2(i). If
\begin{align}
X_t=S(t)X_0+\displaystyle\int^t_0S(t-s)dZ_s.\nonumber
\end{align}
Then for all $t\in[0,T]$,
\begin{align}
\norm{X_t}^2\leq e^{2\lambda t}\norm{X_0}^2+2\displaystyle\int^t_0 e^{2\lambda(t-s)}\inp{X(s),dZ_s}+e^{2\lambda t}\left[\displaystyle\int^._0 e^{-\lambda s}dZ_s\right]_t,~~~w.p. 1,\nonumber
\end{align}
where $[ \quad ]_t$ stands for the quadratic variation process.
\end{prop}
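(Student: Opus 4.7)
The plan is to reduce the problem to the contraction-semigroup case by an exponential rescaling, then apply the classical It\^{o} formula to strong solutions obtained via Yosida regularization of $A$, and finally pass to the limit.

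First, set $Y_t := e^{-\lambda t} X_t$ and $\tilde{Z}_t := \int_0^t e^{-\lambda s}\, dZ_s$. Using the identity $e^{-\lambda t} S(t-s) = e^{-\lambda s}\tilde{S}(t-s)$ with $\tilde{S}(t) := e^{-\lambda t} S(t)$, one checks that $\tilde{S}$ is a contraction $C_0$-semigroup and $Y_t = \tilde{S}(t) X_0 + \int_0^t \tilde{S}(t-s)\, d\tilde{Z}_s$. Since $[\tilde{Z}]_t = \int_0^t e^{-2\lambda s}\, d[Z]_s = [\int_0^{\cdot} e^{-\lambda s}\, dZ_s]_t$ and $\inp{Y_{s-}, d\tilde{Z}_s} = e^{-2\lambda s}\inp{X_{s-}, dZ_s}$, the contraction-case inequality
$$\norm{Y_t}^2 \leq \norm{X_0}^2 + 2\int_0^t \inp{Y_{s-}, d\tilde{Z}_s} + [\tilde{Z}]_t,$$
once multiplied by $e^{2\lambda t}$, is exactly the claimed bound.

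It therefore suffices to treat the case $\norm{S(t)}\leq 1$. Let $A_n := nA(nI-A)^{-1}$ be the Yosida approximations; these are bounded on $H$ and satisfy $\inp{A_n x, x}\leq 0$ for every $x\in H$, a direct consequence of $\norm{n(nI-A)^{-1}}\leq 1$ in the contractive regime. Let $X^n$ be the strong solution of $dX^n_t = A_n X^n_t\, dt + dZ_t$ with $X^n_0 = X_0$; since $A_n$ is bounded and $Z$ is a cadlag semimartingale, so is $X^n$. The classical It\^{o} formula applied to $x\mapsto\norm{x}^2$ gives
$$\norm{X^n_t}^2 = \norm{X_0}^2 + 2\int_0^t \inp{X^n_s, A_n X^n_s}\, ds + 2\int_0^t \inp{X^n_{s-}, dZ_s} + [Z]_t,$$
and discarding the non-positive drift integral yields the desired inequality for $X^n$.

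The remaining step is to send $n\to\infty$. Standard properties of the Yosida scheme give $X^n_t \to X_t$ uniformly on $[0,T]$ in probability, so the left-hand side and the deterministic terms converge pointwise to their counterparts for $X$. The main obstacle is justifying convergence of the stochastic integral $\int_0^t \inp{X^n_{s-}, dZ_s} \to \int_0^t \inp{X_{s-}, dZ_s}$ for the Hilbert-valued, locally square-integrable semimartingale $Z$: one localizes by a sequence of stopping times $\tau_k$ reducing $Z$, establishes a uniform bound $\sup_n \mathbb{E}\sup_{t\leq T\wedge \tau_k}\norm{X^n_t}^2 < \infty$, and invokes the dominated convergence theorem for semimartingale integrals---exactly the route carried out in \cite{kn:Za}.
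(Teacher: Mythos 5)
The paper states this proposition without proof, citing Zangeneh \cite{kn:Za}, so there is no in-paper argument to compare against; what follows assesses your sketch on its own merits. Your argument is correct and is the standard route one expects to find in that reference. The exponential rescaling $Y_t=e^{-\lambda t}X_t$, $\tilde S(t)=e^{-\lambda t}S(t)$, $\tilde Z_t=\int_0^t e^{-\lambda s}\,dZ_s$ does transport the problem to the contraction case, and the bookkeeping checks out: $Y$ is the convolution of $\tilde Z$ against $\tilde S$, $[\tilde Z]_t=\int_0^t e^{-2\lambda s}\,d[Z]_s$, and $\inp{Y_{s-},d\tilde Z_s}=e^{-2\lambda s}\inp{X_{s-},dZ_s}$, so multiplying the contraction-case bound by $e^{2\lambda t}$ reproduces exactly the stated inequality (with the customary understanding that $\inp{X(s),dZ_s}$ in the statement means the left-limit integrand $X_{s-}$, as it must for a cadlag integrator). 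In the contraction case, $\norm{n(nI-A)^{-1}}\leq1$ by Hille--Yosida gives $\inp{A_n x,x}\leq0$, the bounded-generator It\^{o} formula for $\norm{\cdot}^2$ applied to the strong solution $X^n$ is legitimate (and $[X^n]_t=[Z]_t$ since the drift part is continuous and of finite variation), and dropping the dissipative drift integral yields the inequality at level $n$.

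The one place that genuinely needs work---and which you correctly identify---is the passage $n\to\infty$. Uniform-in-$t$ convergence of the stochastic convolution $\int_0^t e^{(t-s)A_n}\,d\tilde Z_s$ to $\int_0^t \tilde S(t-s)\,d\tilde Z_s$ is not an immediate consequence of $e^{tA_n}\to \tilde S(t)$ strongly, because these convolutions are not martingales; one needs a Burkholder/Doob-type estimate for stochastic convolutions (such as the paper's Proposition on Burkholder-type inequality, or the stopped version from \cite{kn:HZ1}) together with localization by stopping times reducing $Z$, and then the convergence of $\int_0^t\inp{X^n_{s-},dZ_s}$ to $\int_0^t\inp{X_{s-},dZ_s}$ by a dominated-convergence argument for stochastic integrals. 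You defer this to \cite{kn:Za}, which is acceptable for a sketch, but it is worth recording that this limiting step is where most of the real work in the cited proof lies.
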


A consequence of Burkholder's inequality together with the Young inequality 
$$ab\leq K\frac{a^p}{p}+K^{-\frac{q}{p}}\frac{b^q}{q},\qquad a, b, K>0;\qquad \frac{1}{p}+\frac{1}{q}=1,$$ 
is the following lemma, which is being used in Subsection 3.3.
\begin{lem}\label{lem1} Let $X_t, \; t\in [0,\infty)$, be an $H$-valued continuous process. if $M_t$ is an $H$-valued square integrable cadlag martingale, then for any constant $K>0$, we have
$$\mathbb{E}\left\{\sup\limits_{0\leq \theta\leq t}\abs{\displaystyle\int^{\theta}_0\inp{X_s,dM_s}}\right\}\leq\frac{3}{2K}\mathbb{E}(X^*_t)^2+\frac{3K}{2}\mathbb{E}([M]_t),$$
in which $X^{*}_t=\sup\limits_{0\leq s\leq t}\norm{X_s}$.
\end{lem}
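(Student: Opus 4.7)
The plan is to scalarize the integrand–integrator pairing and then apply the $L^1$ Burkholder–Davis–Gundy (Davis) inequality; the factor of $3$ that appears in the stated bound is simply Davis' constant. The final balance between $\mathbb{E}(X^*_t)^2$ and $\mathbb{E}([M]_t)$ is then produced by the Young inequality recalled immediately before the statement.

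First I would introduce the real-valued process
$$N_\theta := \int_0^\theta \inp{X_s, dM_s},\qquad \theta\in[0,t],$$
which is a cadlag local martingale, since $X$ is (continuous, hence) predictable and $M$ is an $H$-valued square-integrable cadlag martingale. The quadratic variation of a stochastic integral against a Hilbert-valued martingale satisfies
$$[N]_t \leq \int_0^t \norm{X_s}^2 \, d[M]_s \leq (X^*_t)^2\, [M]_t,$$
the last inequality being the trivial pointwise bound $\norm{X_s}\leq X^*_t$ for $s\leq t$.

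Next I would apply Davis' form of the BDG inequality (with constant $3$) to the scalar process $N$ to obtain
$$\mathbb{E}\!\left\{\sup_{0\leq \theta\leq t}\abs{N_\theta}\right\}\leq 3\,\mathbb{E}\sqrt{[N]_t}\leq 3\,\mathbb{E}\bigl(X^*_t\sqrt{[M]_t}\bigr).$$
Applying Young's inequality in the form stated in the paper with $p=q=2$, $a=\sqrt{[M]_t}$, $b=X^*_t$ and constant $K$ gives
$$X^*_t\sqrt{[M]_t}\leq \frac{K}{2}[M]_t+\frac{1}{2K}(X^*_t)^2,$$
and taking expectations and multiplying through by $3$ delivers exactly the inequality asserted in the lemma.

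The only point that requires a moment of care is the quadratic variation estimate $[N]_t\leq (X^*_t)^2[M]_t$, which relies on the construction of the $H$-valued stochastic integral via elementary predictable processes together with the polarization identity for the vector-valued angle bracket of $M$; this is standard but not entirely trivial. A further small subtlety is that Davis' inequality for a local (rather than genuine) martingale should be combined with a localizing sequence such as $\tau_n=\inf\{s:\norm{X_s}\geq n\}$ followed by monotone convergence, since a priori $\mathbb{E}(X^*_t)^2$ may be infinite; this does not affect the final form of the inequality.
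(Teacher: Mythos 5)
Your proof is correct and follows exactly the route the paper indicates: Lemma 3.1 is presented as a direct consequence of the $L^1$ Burkholder (Davis) inequality combined with the Young inequality quoted just above it, and your argument is precisely this—bound $[N]_t$ by $(X^*_t)^2[M]_t$, apply Davis with constant $3$, then split the product $X^*_t\sqrt{[M]_t}$ by Young with $p=q=2$. The extra remarks on the quadratic-variation estimate and on localizing via $\tau_n$ are sensible hygiene that the paper leaves implicit but do not change the approach.
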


To obtain a uniform estimate of stochastic convolutions, we will apply in Subsection 3.3 the following proposition.
\begin{prop}(\textbf{Burkholder-type inequality})
Suppose $A$ and $S$ satisfy Hypothesis 2(i) and $M$ is an $H$-valued square integrable cadlag  martingale. If $p\geq1$, then there exist suitable constant $C$ such that
\begin{align}
\mathbb{E}\left\{\sup\limits_{0\leq t\leq T}\norm{\displaystyle\int^t_0S(t-s)dM_s}^{2p}\right\}\leq Ce^{\lambda T}\mathbb{E}\{[M]^p_T\}, \quad \forall T>0.\label{18}
\end{align}\
\end{prop}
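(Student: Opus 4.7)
The plan is to apply the It\^{o}-type inequality to the stochastic convolution $X_t:=\int^t_0 S(t-s)\,dM_s$ (which corresponds to $X_0=0$ and $Z=M$) and then to estimate the scalar-valued martingale it produces with the classical Burkholder--Davis--Gundy inequality. After pulling $e^{2\lambda t}$ out of the first integral, so that the integrand against $dM_s$ is genuinely predictable, the inequality reads
$$\norm{X_t}^2 \leq 2 e^{2\lambda t}\int^t_0 e^{-2\lambda s}\inp{X_s,dM_s} + e^{2\lambda t}\left[\int^._0 e^{-\lambda s}\,dM_s\right]_t.$$

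Raising both sides to the $p$-th power via $(a+b)^p \leq 2^{p-1}(a^p+b^p)$, taking the supremum over $t\in[0,T]$, and then expectation, I would handle the two resulting pieces separately. The quadratic variation piece is straightforward: since $\left[\int^._0 e^{-\lambda s}\,dM_s\right]_T = \int^T_0 e^{-2\lambda s}\,d[M]_s \leq [M]_T$ (using $\lambda\ge 0$, cf.\ the remark after Hypothesis~2), its contribution to the bound is at most $Ce^{2p\lambda T}\mathbb{E}\{[M]^p_T\}$, which has exactly the form asserted in the proposition.

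For the scalar martingale $N_t := \int^t_0 e^{-2\lambda s}\inp{X_s,dM_s}$, BDG together with the predictable estimate $d[N]_s \leq e^{-4\lambda s}\norm{X_s}^2\,d[M]_s$ yields
$$\mathbb{E}\sup_{t\leq T}\abs{N_t}^p \leq C\,\mathbb{E}\para{\int^T_0 \norm{X_s}^2\,d[M]_s}^{p/2} \leq C\,\mathbb{E}\para{(X^*_T)^p\,[M]_T^{p/2}},$$
where $X^*_T := \sup_{t\leq T}\norm{X_t}$. Applying Young's inequality (as displayed before Lemma \ref{lem1}) with exponents $2,2$ and a sufficiently small parameter $\alpha$ splits this bound into $\alpha\,\mathbb{E}(X^*_T)^{2p}$ plus a multiple of $\mathbb{E}\{[M]^p_T\}$; the former term is then absorbed into the left-hand side, closing the estimate.

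The main obstacle is that $X_t$ is \emph{not} itself a martingale, because of the time-dependent semigroup factor $S(t-s)$, so BDG cannot be invoked on $X$ directly; the It\^{o}-type inequality is precisely the device that exchanges the convolution for a genuine scalar martingale plus a quadratic-variation remainder, after which the only delicate point is the circular appearance of $X^*_T$ on the right-hand side. For the absorption step to be rigorous one needs a priori finiteness of $\mathbb{E}(X^*_T)^{2p}$, which is secured in the standard way by a stopping-time truncation $\tau_n := \inf\{t: \norm{X_t}\geq n\}$ and a monotone-limit passage $n\to\infty$ once the inequality has been established uniformly in $n$.
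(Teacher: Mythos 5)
The paper itself does not prove this proposition; it simply states it and then remarks that an extension for stopping times was obtained in \cite{kn:HZ1}, so there is no in-text proof to compare against. Your argument is nonetheless a correct, self-contained derivation and follows the classical template for such stochastic-convolution Burkholder estimates (It\^{o}-type inequality to trade the convolution for a genuine scalar martingale plus a bracket term, then scalar Burkholder--Davis--Gundy, then Young's inequality and absorption of $\mathbb{E}(X^*_T)^{2p}$). Two small points are worth flagging. First, your chain of inequalities naturally produces a prefactor $e^{2p\lambda T}$ (and the absorption constant also depends on $e^{2p\lambda T}$), not $e^{\lambda T}$ as printed in the proposition; under Remark~2.9, where $\lambda=0$ is assumed without loss of generality, this is immaterial, but as stated for general $\lambda$ the exponent in the proposition appears to be a typographical slip rather than something your argument should reproduce literally. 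Second, your stopping-time $\tau_n=\inf\{t:\norm{X_t}\geq n\}$ does give a priori finiteness, but only after one also observes that the overshoot at $\tau_n$ is a jump of the convolution, $\Delta X_{\tau_n}=\Delta M_{\tau_n}$, whose $2p$-th moment is dominated by $\mathbb{E}\,[M]_T^{p}$; without this remark it is not immediate that $\mathbb{E}(X^*_{T\wedge\tau_n})^{2p}<\infty$. With that detail filled in and the exponent corrected, the proof is sound.
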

An extension of this proposition for stopping times has been proved in \cite{kn:HZ1}.

The well known lemma, Gronwall's lemma, are being used  in our arguments several times. 
\begin{lem}(\textbf{Gronwall's lemma})
Let the functions $f:\Real\rightarrow\Real$ and $g:\Real\rightarrow [0,\infty)$ satisfy 
$$f(s)\leq a+\int^s_0f(r)g(r)dr $$
for some $t\geq 0$ and all $s\in[0,t]$. Then we have the estimate $f(t)\leq ae^{\int^t_0g(s)ds}.$
\end{lem}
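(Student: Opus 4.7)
The plan is to apply the classical integrating-factor trick. First I would set $F(s) := a + \int_0^s f(r)g(r)\,dr$ for $s \in [0,t]$, so that by hypothesis $f(s) \leq F(s)$ on $[0,t]$. Since $g \geq 0$, multiplying this pointwise bound by $g(s)$ gives $f(s)g(s) \leq F(s)g(s)$, and as $F$ is absolutely continuous with $F'(s) = f(s)g(s)$ almost everywhere, I obtain the differential inequality $F'(s) \leq F(s)g(s)$ almost everywhere on $[0,t]$.

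Next, writing $G(s) := \int_0^s g(r)\,dr$ and multiplying by the nonnegative integrating factor $e^{-G(s)}$,
$$\frac{d}{ds}\bigl(F(s)e^{-G(s)}\bigr) = \bigl(F'(s) - g(s)F(s)\bigr)e^{-G(s)} \leq 0,$$
so $s \mapsto F(s)e^{-G(s)}$ is non-increasing on $[0,t]$. Evaluating at the endpoints gives $F(t)e^{-G(t)} \leq F(0) = a$, hence $F(t) \leq a e^{\int_0^t g(s)\,ds}$, and combining with $f(t) \leq F(t)$ yields the conclusion.

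The only mildly delicate point is regularity: for the fundamental-theorem step I implicitly need $fg$ to be locally integrable so that $F$ is absolutely continuous. An equivalent approach that sidesteps this is iteration --- substituting the hypothesis into itself $n$ times produces $f(t) \leq a\sum_{k=0}^{n-1}G(t)^k/k! + R_n$ with $R_n \to 0$, recovering the exponential series directly. No substantive obstacle is expected; the lemma is standard ODE folklore, and the real work in the paper will be in the estimates that feed into it.
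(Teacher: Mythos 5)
The paper does not prove Gronwall's lemma; it is stated as a well-known fact and used freely, so there is no authorial proof to compare against. Your integrating-factor argument is the standard one and is correct: you rightly note that $g\geq 0$ is what preserves the inequality when multiplying by $g(s)$, and that $e^{-G(s)}>0$ justifies the monotonicity step. The regularity caveat you flag --- needing $fg$ (and $g$) locally integrable so that $F$ and $G$ are absolutely continuous, making the almost-everywhere differential inequality integrate to a pointwise monotonicity statement --- is exactly the right one to raise, and your iteration-based alternative correctly sidesteps it.
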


\subsection{The Main Result}

To clarify the weak convergence framework in our setting, we define for any $g\in S^N$ the map  $\mathcal{G}^0(\nu^g_T)$, as the strong solution of the  integral equation
\begin{align}
X^g_t=S(t)X_0&+\int^t_0S(t-s)f(X^g_s)ds\nonumber\\
&+\int^t_0S(t-s)\int_{\mathbf{X}}G(s,X^g_s,v)(g(s,v)-1)
\nu(dv)ds,\label{skeleton}
\end{align}
where $S(t), f$ and $G$ are being specified in Hypothesis 2 and $X_0$ satisfies $\mathbb{E}\norm{X_0}^2<\infty$. Furthermore, for any $g^{\eps}\in S^N$, $\mathcal{G}^{\eps}(\eps N^{\eps^{-1} g^{\eps}})=X^{\eps}$ is the mild solution of the following  evolution equation with initial condition $X^{\eps}(0)=X_0$,
\begin{align}
dX^{\eps}_t=[A X_t+f(X^{\eps}_t)]dt+\int_{\mathbf{X}}G(s,X^{\eps}_{t-},v)\left(\eps N^{\eps^{-1} g^{\eps}}(dvdt)-\nu(dv)dt\right).\label{maineps}
\end{align}

The following theorem is the main result we are pursuing in this paper.

\begin{thm}\label{mainthm}
Let the Hypotheses 2 and 3 be satisfied and $X^{\eps}_t$ be the mild solution of the  evolution equation
\begin{align}
dX^{\eps}_t=[AX^{\eps}_t+f(X^{\eps}_t)]+\eps\int_{\mathbf{X}}G(t,X^{\eps}_t,v)\tilde{N}^{\eps^{-1}}(dvdt).\label{ldpeq}
\end{align}
Then the family of measures generated by the random variables $\{X^{\eps}, \eps>0\}$ on the space $\mathcal{D}([0,T];H)$ satisfies LDP in the uniform topology.
\end{thm}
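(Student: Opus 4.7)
The plan is to invoke the general LDP theorem (Theorem~\ref{mainthm}) with the maps $\mathcal{G}^0$ and $\mathcal{G}^{\eps}$ prescribed in (\ref{skeleton}) and (\ref{maineps}), which reduces the proof to verifying the two parts of Hypothesis~1. Prior to either verification I would establish the uniform a priori bound
$$\sup_{g\in S^N}\sup_{0\leq t\leq T}\norm{X^g_t}^2 \leq C_N\para{1+\norm{X_0}^2},$$
(and its analog for $X^{\eps}$), by applying the energy-type (respectively It\^o-type plus Burkholder-type) inequality to the mild equation and controlling the intensity-perturbed term $\int_0^t\!\!\int_{\mathbf{X}} \inp{X^g_s, G(s,X^g_s,v)}(g(s,v)-1)\,\nu(dv)\,ds$ through the Fenchel duality $ab \leq \alpha^{-1} l(a) + \alpha^{-1}(e^{\alpha b}-1)$ associated to $l(r)=r\log r-r+1$. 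The constraint $L_T(g)\leq N$ together with Hypothesis~3 then yields a deterministic integrable upper bound, after which Gronwall's lemma closes the argument.

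For Hypothesis~1(i), fix $g^n\to g$ in $S^N$ and set $Y_t=X^{g^n}_t-X^g_t$. Subtracting the two instances of (\ref{skeleton}) and applying the energy-type inequality, the relevant driver decomposes as
\begin{align*}
a(s) =\; & [f(X^{g^n}_s)-f(X^g_s)] + \int_{\mathbf{X}} [G(s,X^{g^n}_s,v)-G(s,X^g_s,v)](g^n(s,v)-1)\,\nu(dv)\\
& + \int_{\mathbf{X}} G(s,X^g_s,v)(g^n(s,v)-g(s,v))\,\nu(dv).
\end{align*}
The drift difference is absorbed by the semimonotonicity of $-f$ (Hypothesis~2(iv)) into $M\norm{Y_s}^2$. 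The second summand is controlled, via Cauchy--Schwarz, Hypothesis~2(v), and the same entropy duality, by $\norm{Y_s}^2 \phi_n(s)$ with $\phi_n$ uniformly integrable on $[0,T]$. The third summand is the continuity-driving term: I would view it as integration of the $\mathbf{X}_T$-indexed function $v\mapsto\inp{Y_s, G(s,X^g_s,v)}$ against the signed measure $\nu^{g^n}_T-\nu^g_T$, truncate outside $\{\norm{G(s,v)}_0\leq R\}$, pass to the limit $n\to\infty$ on the truncated integrand using the compact topology on $S^N$, and dominate the truncation tail uniformly in $n$ via Hypothesis~3. A Gronwall step then yields $\sup_t\norm{Y_t}\to 0$.

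For Hypothesis~1(ii), given $g^{\eps}\to g$ in distribution with $g^{\eps},g\in\mathcal{U}^N$, I would first invoke the Skorokhod representation to pass to an almost-sure version, then compare $X^{\eps}$ to the same-control skeleton $\tilde{X}^{\eps}:=\mathcal{G}^0(\nu^{g^{\eps}}_T)$ and show $\mathbb{E}\sup_{0\leq t\leq T}\norm{X^{\eps}_t-\tilde X^{\eps}_t}^2\to 0$; combined with part~(i) applied pathwise to $g^{\eps}\to g$, this gives $X^{\eps}\to\mathcal{G}^0(\nu^g_T)$ in probability, and hence in distribution. Setting $Z^{\eps}_t=X^{\eps}_t-\tilde X^{\eps}_t$ and applying the It\^o-type inequality, with the cross-term controlled by Lemma~\ref{lem1} and the martingale supremum by the Burkholder-type inequality, semimonotonicity plus the Lipschitz property of $G$ produce an estimate of the form
$$\mathbb{E}\sup_{0\leq s\leq t}\norm{Z^{\eps}_s}^2 \leq C_N\,\eps + C\int_0^t \mathbb{E}\sup_{0\leq r\leq s}\norm{Z^{\eps}_r}^2\,ds,$$
the $O(\eps)$ term arising from the quadratic variation bound $\eps^2\cdot\eps^{-1}\int_0^T\!\!\int_{\mathbf{X}}\norm{G(s,X^{\eps}_s,v)}^2 g^{\eps}\,\nu(dv)\,ds$, once more handled through the Fenchel duality with $L_T(g^{\eps})\leq N$ and Hypothesis~3. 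A final Gronwall argument completes the proof.

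The main obstacle, in both~(i) and~(ii), is the uniform-in-$g\in S^N$ control of mixed integrals of the form $\int_0^T\!\!\int_{\mathbf{X}} H(s,v)(g(s,v)-1)\,\nu(dv)\,ds$: the entropy bound $L_T(g)\leq N$ alone is too weak to dominate such integrals when $H$ is only $L^2(\nu_T)$, so the analysis must exploit the Fenchel conjugate $l^{*}(y)=e^y-1$ of $l$. Verifying that Hypothesis~3 supplies the required exponential integrability of $\norm{G(s,v)}_0$ and $\norm{G(s,v)}_1$ uniformly in $s$, and in particular that the weak-convergence term in step~(i) vanishes uniformly in $t\in[0,T]$, is where the technical weight of the argument will lie.
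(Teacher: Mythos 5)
Your overall strategy is the same as the paper's: reduce to Hypothesis~1 via the abstract transfer theorem, establish uniform a priori bounds with the energy/It\^{o}-type inequalities, and close the estimates with Gronwall. The a priori bounds and the $O(\eps)$ comparison of $X^\eps$ with the same-control skeleton $\tilde X^\eps$ in part~(ii) are sound, since there Cauchy--Schwarz plus the entropy bounds of Lemma~\ref{gint} suffice because both solutions carry the \emph{same} control $g^\eps$.

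The genuine gap is in your treatment of Hypothesis~1(i). After the decomposition, the continuity-driving contribution has the form $\int_0^t\inp{Y_s,\,h_n(s)}\,ds$ with $Y_s=X^{g^n}_s-X^g_s$ and $h_n(s)=\int_{\mathbf X}G(s,X^g_s,v)\,(g^n-g)\,\nu(dv)$. Your plan is to truncate $G$, regard the truncated piece as integration of a fixed bounded function against $\nu^{g^n}_T-\nu^g_T$, and pass to the limit using the compactness of $S^N$. But the integrand $\inp{Y_s,G(s,X^g_s,v)}$ is not fixed: $Y_s$ depends on $n$. Weak convergence in $S^N$ only gives $\int_{\mathbf X_T}h(s,v)(g^n-g)\,\nu_T(ds\,dv)\to 0$ for a \emph{fixed} test function $h$ (Lemma~\ref{hcon}); it does not control a term where the test function varies with $n$, nor does it imply $\int_0^T\|h_n(s)\|\,ds\to 0$ (oscillation of $h_n$ in $s$ is not excluded), so a crude Cauchy--Schwarz bound fails as well. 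The paper resolves this by introducing the Yosida approximation $A_k$: the regularized solutions $X^{n,k}$ are strong and hence differentiable, so integration by parts rewrites $\int_0^t\inp{Y^k_s,h_n(s)}ds$ as $\inp{Y^k_t,H_n(t)}-\int_0^t\inp{(Y^k_s)',H_n(s)}ds$ with $H_n(t)=\int_0^t h_n$, and $\|H_n\|_\infty\to 0$ is exactly the uniform-in-$t$ statement Lemma~\ref{hcon} delivers. All $n$-dependence is thereby moved into a quantity that vanishes uniformly, and the double limit ($k\to\infty$ at fixed $n$, then $n\to\infty$ uniformly in $k$) is assembled with a $3\eps$-argument. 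Your Skorokhod/skeleton reduction in part~(ii) is a legitimate alternative decomposition, but it invokes part~(i) pathwise and so inherits the same gap. You also omit the preliminary step of showing $\mathcal{G}^0$ is well defined at all (the skeleton problem, Theorem~\ref{skeletonthm}), which the paper settles separately by truncating $g$ before either verification is attempted.
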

To prove this theorem, we should verify that the defined $G^0, G^{\eps}$ satisfy Hypothesis 1.


\section{Large Deviation Principle} 
In the first step, we show that  the map $\mathcal{G}^0$ is well defined. In the second subsection we will ascertain that the function defined in (\ref{rate}) is a good rate function. Finally in the third subsection we will verify  Hypothesis 1(ii).

\subsection{\textbf{The Skeleton Problem}}
We must prove that for every measurable $g\in \mathbb{S}$, Equation (\ref{skeleton}) has a  solution. In the proof, we will use the following lemmas, which has been proved in \cite[Lemma 3.4, Lemma 3.11]{kn:BDM}. 

\begin{lem}\label{gint}
Under the Hypothesis 2(v), for every $N\in\mathbb{N}$, the following estimates hold
$$\sup_{g\in S^N}\int_{\mathbb{X}_T}\norm{G(t,v)}^2_0(g(t,v)+1)\nu(dv)dt<\infty,$$

$$\sup_{g\in S^N}\int_{\mathbb{X}_T}\norm{G(t,v)}_0|g(t,v)-1|\nu(dv)dt<\infty.$$
\end{lem}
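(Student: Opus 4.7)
The plan is to derive both estimates from the Fenchel--Young inequality associated with the convex function $l(r) = r\log r - r + 1$, whose Legendre transform on $[0,\infty)$ is $l^\ast(a) = e^a - 1$. This yields, for every $a, b \geq 0$ and $\sigma > 0$,
\begin{align*}
ab \leq \frac{1}{\sigma}\para{l(b) + e^{\sigma a} - 1}.
\end{align*}
Applying it pointwise with $a = \norm{G(t,v)}_0^2$, $b = g(t,v)$, integrating against $\nu_T$, and using $L_T(g) \leq N$, I would obtain
\begin{align*}
\int_{\mathbf{X}_T} \norm{G(t,v)}_0^2\, g(t,v)\,\nu(dv)dt \leq \frac{1}{\sigma}\para{N + \int_{\mathbf{X}_T}\para{e^{\sigma \norm{G(t,v)}_0^2} - 1}\nu(dv)dt},
\end{align*}
uniformly in $g \in S^N$. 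The "$+1$" contribution $\int \norm{G}_0^2\,d\nu_T$ is dominated by the same exponential integral via $e^{\sigma a} - 1 \geq \sigma a$, so the first bound reduces to proving finiteness of this exponential integral.

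For the second estimate I would appeal to the elementary inequality $\abs{y - 1} \leq \sqrt{2\,l(y)\,(y + 1)}$, valid for every $y \geq 0$ (both sides vanish at $y = 1$; a first- and second-derivative comparison extends the bound to $[0,\infty)$). Cauchy--Schwarz then gives
\begin{align*}
\int_{\mathbf{X}_T} \norm{G}_0\abs{g - 1}\,d\nu_T \leq \sqrt{2}\para{\int_{\mathbf{X}_T} \norm{G}_0^2(g+1)\,d\nu_T}^{1/2}\para{\int_{\mathbf{X}_T} l(g)\,d\nu_T}^{1/2},
\end{align*}
whose first factor is controlled by the first estimate and whose second factor is at most $\sqrt{N}$. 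Both bounds are thus uniform over $g \in S^N$ because $g$ enters only through $L_T(g) \leq N$.

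The main obstacle is promoting the local exponential integrability in Hypothesis~3, which is stated only over sets $E$ with $\nu_T(E) < \infty$, to a genuinely global bound on $\int_{\mathbf{X}_T}\para{e^{\sigma \norm{G(t,v)}_0^2} - 1}\,d\nu_T$. My approach would be to apply Chebyshev's inequality $\nu_T\para{\norm{G}_0 > \eta} \leq e^{-\delta \eta^2}\int e^{\delta \norm{G}_0^2}\,d\nu_T$ on an exhausting sequence of $\nu_T$-finite subsets to deduce $\nu_T\set{\norm{G}_0 > \eta} < \infty$ for every $\eta > 0$, decompose $\set{\norm{G}_0 > 1}$ dyadically to obtain a convergent series (exploiting that $\delta$ may be taken arbitrarily large by the remark following Hypothesis~3), and handle the complement $\set{\norm{G}_0 \leq 1}$ using $e^{\sigma a} - 1 \leq \sigma a e^{\sigma}$ together with the near-zero integrability of $\norm{G}_0^2$ that the framework implicitly demands. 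Once global exponential integrability is secured, everything else is routine.
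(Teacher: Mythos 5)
The paper does not prove this lemma; it simply cites \cite[Lemma 3.4, Lemma 3.11]{kn:BDM}, so there is no in-paper argument to compare against. Your broad strategy --- Fenchel--Young for $l(r)=r\log r-r+1$ with conjugate $l^*(a)=e^a-1$, followed by the pointwise inequality $(y-1)^2\leq 2\,l(y)(y+1)$ and Cauchy--Schwarz for the second estimate --- is the right idea and matches the spirit of the BDM proof. The inequality $(y-1)^2\leq 2\,l(y)(y+1)$ is in fact correct: the difference is convex in $y$ with a double zero at $y=1$, as a second-derivative check confirms.

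The gap is in your ``local to global'' step, and it is not cosmetic. You propose to deduce $\nu_T(\{\norm{G}_0>\eta\})<\infty$ by applying exponential Chebyshev on an exhausting sequence $E_n\uparrow\mathbf{X}_T$ of $\nu_T$-finite sets. But Hypothesis~3 gives $\int_{E_n}e^{\delta\norm{G}_0^2}d\nu_T<\infty$ only for each fixed $n$; nothing prevents these integrals from increasing without bound, so passing to the limit yields no finiteness for $\nu_T(\{\norm{G}_0>\eta\})$. The argument is circular: the global exponential moment is exactly what you are trying to establish, and you cannot invoke it on the full space. The robust way around this --- and the route taken in \cite{kn:BDM} --- is to partition the domain according to the size of $g$, not the size of $\norm{G}_0$. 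Since $l$ is increasing on $[1,\infty)$ with $l(c)>0$ for $c>1$, the inclusion $\{g>c\}\subseteq\{l(g)>l(c)\}$ together with $L_T(g)\leq N$ gives $\nu_T(\{g>c\})\leq N/l(c)<\infty$ \emph{uniformly over} $g\in S^N$. Hypothesis~3 then applies directly on the $\nu_T$-finite set $\{g>c\}$, where your Fenchel--Young estimate closes the bound; on the complement $\{g\leq c\}$ the control $g$ is bounded and only $\int_{\mathbf{X}_T}\norm{G}_0^2\,d\nu_T<\infty$ is needed. (You are right that this last integrability is not a consequence of Hypothesis~2(v), which controls $\norm{G}_1$ rather than $\norm{G}_0$; it is a separate standing assumption in the BDM framework that this paper leaves implicit.) With the split driven by $g$ rather than by $\norm{G}_0$, the circularity disappears and the rest of your argument goes through.
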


\begin{rem}\label{gintrem}
The above estimates satisfy if we replace the norm $\norm{\cdot}_0$ by $\norm{\cdot}_1$, see \cite[Remark 3.6]{kn:BDM}.
$$\sup_{g\in S^N}\int_{\mathbb{X}_T}\norm{G(t,v)}^2_1(g(t,v)+1)\nu(dv)dt<\infty,$$

$$\sup_{g\in S^N}\int_{\mathbb{X}_T}\norm{G(t,v)}_1|g(t,v)-1|\nu(dv)dt<\infty.$$
\end{rem}

\begin{lem}\label{hcon}
Fix $N\in\mathbb{N}$ and let $g_n, g\in S^N$ be such that $g_n\rightarrow g$ as $n\rightarrow \infty$. Let $h:[0,T]\times\mathbf{X}\rightarrow\Real$ be a measurable function such that 
$$\int_{\mathbf{X}_T}|h(t,v)|^2\nu_T(dvdt)<\infty.$$
Furthermore suppose that
$$\int_{E}e^{\delta |h(t,v)|}\nu_T(dvdt),$$
for all $\delta\in(0,\infty)$ and all $E\in\mathcal{B}([0,T]\times\mathbf{X})$ satisfying $\nu_T(E)<\infty$. Then we have 
$$\int_{\mathbf{X}_T}h(t,v)(g_n(t,v)-1)\nu_T(dvdt)\rightarrow \int_{\mathbf{X}_T}h(t,v)(g(t,v)-1)\nu_T(dvdt),$$
as $n\rightarrow \infty$.
\end{lem}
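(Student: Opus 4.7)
The plan is to reduce the assertion to test functions $\phi\in C_c(\mathbf{X}_T)$, for which the claim is immediate: the identification $g\mapsto\nu^g_T$ makes the convergence $g_n\to g$ in $S^N$ precisely the vague convergence $\nu^{g_n}_T\to\nu^g_T$ in $\mathbb{M}$, so $\int\phi(g_n-g)\,d\nu_T\to 0$ by definition. The reduction runs in two layers---first truncate $h$ to a bounded function supported on a compact set of finite $\nu_T$-measure, then apply Lusin's theorem to approximate the truncation by a continuous compactly supported function. The master tool for controlling both layers uniformly in $n$ is the Young--Fenchel inequality dual to $l$ (since $l^*(y)=e^y-1$),
\[
ab \leq \sigma\,l(a) + \sigma\bigl(e^{b/\sigma}-1\bigr),\qquad a\geq 0,\ b\in\Real,\ \sigma>0,
\]
which, combined with the entropy bound $L_T(g_n)\leq N$, yields for any measurable $E\subseteq\mathbf{X}_T$ the estimate
\[
\int_{E}|h|\,g_n\,d\nu_T \leq \sigma N + \sigma\int_{E}\bigl(e^{|h|/\sigma}-1\bigr)\,d\nu_T,
\]
and the analogous bound with $g$ in place of $g_n$.

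For the truncation layer, the hypothesis $\int|h|^2\,d\nu_T<\infty$ forces $\{|h|\geq 1/k\}$ to have finite $\nu_T$-measure for every $k$, so $\{h\neq 0\}$ is $\sigma$-finite; inner regularity of $\nu_T$ on the Polish space $\mathbf{X}_T$ then produces an increasing sequence of compacts $K_j$ of finite $\nu_T$-measure exhausting it. Set $h_j:=h\,\mathbf{1}_{K_j\cap\{|h|\leq j\}}$. The master estimate combined with dominated convergence applied to the exponentially integrable envelope---whose finiteness for every $\delta>0$ on finite-measure sets is exactly the hypothesis on $h$---delivers $\sup_n\int|h-h_j|(g_n+g)\,d\nu_T\to 0$ as $j\to\infty$.

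For the continuous-approximation layer, fix $j$ and apply Lusin's theorem to the finite Radon measure $\nu_T|_{K_j}$ to obtain, for any $\eta>0$, a function $\phi\in C_c(\mathbf{X}_T)$ with $|\phi|\leq j$ and $\nu_T(\{h_j\neq\phi\})<\eta$. A second appeal to the master estimate gives $\sup_n\int|h_j-\phi|(g_n+g)\,d\nu_T\to 0$ as $\eta\to 0$, and the vague convergence on $\phi$ then closes the chain.

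The delicate point is the joint calibration in the first layer: the Young parameter $\sigma$ must be shrunk to make $\sigma N$ negligible, but then $1/\sigma$ grows in the exponent $e^{|h|/\sigma}$, so the exponential integrability must be available at \emph{every} level $\delta>0$---which is precisely how the hypothesis is phrased. A subsidiary technicality is the splitting of $\{h\neq 0\}\setminus(K_j\cap\{|h|\leq j\})$ into its ``large-$|h|$'' piece $\{|h|>j\}$ (finite $\nu_T$-measure, so dominated convergence applies directly) and its ``spatial-tail'' piece $K_j^c\cap\{|h|\leq j\}$ (whose $\nu_T$-measure vanishes with $j$ by the construction of $K_j$), both of which are handled by the master estimate with a single common choice of $\sigma$.
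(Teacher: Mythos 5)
The paper does not actually reproduce a proof of this lemma; it cites \cite[Lemma 3.11]{kn:BDM}. So the only question is whether your argument closes, and there is a genuine gap in the truncation layer.

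You split $\{h\neq 0\}\setminus(K_j\cap\{|h|\leq j\})$ into $\{|h|>j\}$ and $K_j^c\cap\{|h|\leq j\}$ and assert that the $\nu_T$-measure of the second piece ``vanishes with $j$.'' That is false in general: the hypotheses only give $\int|h|^2\,d\nu_T<\infty$, which does \emph{not} force $\nu_T(\{h\neq 0\})<\infty$ (take $h\in L^2\cap L^\infty$ with full support on an infinite-measure space; exponential integrability on finite-measure sets is then automatic). If $\nu_T(\{h\neq 0\})=\infty$, every $K_j$ has finite measure, so $\nu_T\bigl(K_j^c\cap\{h\neq 0\}\bigr)=\infty$ for all $j$, and the spatial tail never becomes small in measure. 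Your master estimate is also useless there: $\int_E(e^{|h|/\sigma}-1)\,d\nu_T$ can be infinite on such $E$, because $e^{|h|/\sigma}-1\sim|h|/\sigma$ where $|h|$ is small, and $|h|$ is only assumed to be in $L^2$, not $L^1$. The Young--Fenchel pairing of $g_n$ against $|h|$ simply cannot control the region where $g_n$ is close to $1$ and $|h|$ is small but spread out, because there $l(g_n)$ is \emph{quadratic} in $g_n-1$ while $|g_n-1|$ is linear, and the only uniform handle you have on $h$ there is $L^2$.

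The standard repair (and what \cite{kn:BDM} does) is to split on the size of $g_n$, not on the spatial support of $h$. The entropy bound gives a uniform-in-$n$ finite-measure bound $\nu_T(\{|g_n-1|>\beta\})\leq N/\inf_{|a-1|>\beta}l(a)$, so on $\{|g_n-1|>\beta\}$ you may use Young--Fenchel and the exponential integrability exactly as you do. On the complementary set $\{|g_n-1|\leq\beta\}$ one uses the elementary inequality $(a-1)^2\leq c(\beta)\,l(a)$ for $|a-1|\leq\beta$ (from $l''\geq(1+\beta)^{-1}>0$ there), Cauchy--Schwarz, and the hypothesis $\int|h|^2\,d\nu_T<\infty$; the quadratic structure of $l$ near $1$ is what pairs correctly with the $L^2$ bound on $h$. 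Combining this with your cutoff at levels $\{|h|>M\}$ (finite measure, handled by Young and dominated convergence as $M\to\infty$) and then Lusin on the finite-measure bounded piece recovers the lemma. A minor secondary point: in your Lusin layer the master estimate leaves a residual $\sigma N$ that does not vanish as $\eta\to 0$ alone; you must take $\sigma$ small first and then $\eta$ small, which your ``delicate point'' paragraph gestures at but the ``as $\eta\to 0$'' phrasing obscures.
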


\begin{thm}\label{skeletonthm}
Let the assumptions of Hypotheses  2 and 3 be satisfied. Then for every measurable function $g\in \mathbb{S}$, the following equation has the strong solution  $X^g_t$,
\begin{align}
X^g_t=S(t)X_0&+\int^t_0S(t-s)f(X^g_s)ds\nonumber\\
&+\int^t_0S(t-s)\int_{\mathbf{X}}G(s,X^g_s,v)(g(s,v)-1)\nu(dv)ds.\label{maineq}
\end{align}
\end{thm}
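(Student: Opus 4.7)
The plan is to establish existence and uniqueness of the strong solution of (\ref{maineq}) via a monotone-approximation scheme along the lines of Zangeneh \cite{kn:Za}, with the energy-type inequality (Proposition 2.8) and Gronwall's lemma as the central tools.

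\textbf{Uniqueness.} Given two strong solutions $X^1$ and $X^2$ sharing the same initial condition, the difference $Y_t := X^1_t - X^2_t$ fits the hypothesis of the energy-type inequality (initial condition zero, $\lambda=0$ by the remark following Hypothesis~2), yielding
\[
\norm{Y_t}^2 \leq 2\int_0^t \inp{Y_s, f(X^1_s) - f(X^2_s)} ds + 2\int_0^t\int_{\mathbf{X}}\inp{Y_s, G(s,X^1_s,v) - G(s,X^2_s,v)}(g(s,v)-1)\nu(dv)ds.
\]
The first integrand is dominated by $M\norm{Y_s}^2$ via Hypothesis~2(iv); the second by $\norm{Y_s}^2 \cdot \norm{G(s,v)}_1 |g(s,v)-1|$ after Cauchy--Schwarz and the definition of $\norm{\cdot}_1$. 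Since Remark~\ref{gintrem} guarantees $s \mapsto \int_{\mathbf{X}}\norm{G(s,v)}_1|g(s,v)-1|\nu(dv) \in L^1([0,T])$, Gronwall's lemma forces $Y \equiv 0$.

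\textbf{Existence.} I fix $N$ with $g \in S^N$ and approximate $f$ by a sequence $\{f_n\}$ of globally Lipschitz mappings inheriting the semi-monotonicity constant $M$ and the linear growth bound of Hypothesis~2(iii), and satisfying $f_n(x_n) \to f(x)$ weakly whenever $x_n \to x$ strongly (one may shift $f$ by $-MI$ to obtain a maximal dissipative nonlinearity, take the standard Yosida approximation, and shift back). For each $n$, the Lipschitz property of $f_n$ together with the time-integrable Lipschitz control on the control-measure term afforded by Remark~\ref{gintrem} makes the corresponding equation solvable by a Picard iteration in $C([0,T];H)$ under an exponentially-weighted norm, producing a unique $X^n$ satisfying (\ref{maineq}) with $f_n$ in place of $f$. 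A uniform a priori bound $\sup_n \sup_{t\in[0,T]} \norm{X^n_t} < \infty$ then follows from a further application of the energy-type inequality to $X^n$ combined with semi-monotonicity, the linear growth of $f_n$, the $\norm{\cdot}_0$ version of Lemma~\ref{gint}, and Gronwall.

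To pass to the limit I will apply the energy-type inequality once more to $X^n - X^m$, splitting $f_n(X^n_s) - f_m(X^m_s) = [f_n(X^n_s) - f_n(X^m_s)] + [f_n(X^m_s) - f_m(X^m_s)]$: the first bracket is dominated by $M\norm{X^n_s - X^m_s}^2$ in the inner product against $X^n_s - X^m_s$ by semi-monotonicity uniformly in $n$, while the second yields a remainder vanishing as $n,m \to \infty$ by the construction of $\{f_n\}$ on the uniformly bounded range of $X^m$. Gronwall then produces a Cauchy sequence in the uniform topology on $\mathcal{D}([0,T];H)$; Lemma~\ref{gint} and the Lipschitz property of $G$ allow passage to the limit in the control integral, while demicontinuity of $f$ together with the uniform linear growth bound identifies the limit of the drift integral with $\int_0^t S(t-s) f(X^g_s) ds$. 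The principal technical obstacle is precisely this last identification: since $f$ is merely demicontinuous (not continuous), one cannot commute $f$ with pointwise limits inside the convolution, and the resolution hinges on carefully combining strong convergence of $X^n$, weak continuity of $f$, uniform $L^2$ control from linear growth, and semi-monotonicity -- the very properties the approximation $\{f_n\}$ is engineered to preserve.
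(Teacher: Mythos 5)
Your approach is a genuinely different route from the paper's: you approximate the monotone drift $f$ by Lipschitz mappings $f_n$ (a Yosida regularization of the shifted operator $f-MI$), whereas the paper leaves $f$ untouched, truncates the control $g^n = g\,1_{g^{-1}[-n,n]}$, solves the bounded-$g$ equation by appealing to Browder's deterministic monotone theory, and then passes to the limit in $n$. The paper's choice is precisely engineered so that the Cauchy estimate for $X^{g^n}-X^{g^m}$ can use semi-monotonicity of the unapproximated $f$ directly, while the part that actually changes with $n$ is the term $\int_{\mathbf{X}}G(\cdot)(g^n-1)\nu(dv)$, which is controlled by the Lipschitz norm $\|G\|_1$, Lemma~\ref{gint}, Remark~\ref{gintrem} and the convergence result Lemma~\ref{hcon}. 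That is the structural advantage of truncating $g$ rather than regularizing $f$.

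However, as written your Cauchy step has a gap. You split $f_n(X^n_s)-f_m(X^m_s)=[f_n(X^n_s)-f_n(X^m_s)]+[f_n(X^m_s)-f_m(X^m_s)]$ and assert that the second bracket ``yields a remainder vanishing as $n,m\to\infty$ by the construction of $\{f_n\}$ on the uniformly bounded range of $X^m$.'' This would require $f_n\to f$ uniformly on bounded subsets of $H$. The Yosida approximation of a maximal monotone demicontinuous operator gives only pointwise strong convergence $f_n(x)\to f(x)$ with the domination $\norm{f_n(x)-Mx}\le\norm{f(x)-Mx}$; it does not give uniform convergence on bounded sets, and one cannot expect it here since $f$ is only demicontinuous, not strongly continuous. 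Moreover, the argument is a moving-target one (the point $X^m_s$ changes with $m$), so even pointwise convergence does not close the estimate. The correct way to make your approach work is the classical Yosida trick: writing $A=-(f-MI)$ and $A_\lambda=A(I+\lambda A)^{-1}=A\circ J_\lambda$, decompose
\begin{align}
\inp{A_{\lambda_n}x-A_{\lambda_m}y,\,x-y}
=\inp{A(J_{\lambda_n}x)-A(J_{\lambda_m}y),\,J_{\lambda_n}x-J_{\lambda_m}y}
+\inp{A_{\lambda_n}x-A_{\lambda_m}y,\,\lambda_n A_{\lambda_n}x-\lambda_m A_{\lambda_m}y},\nonumber
\end{align}
observe the first term is nonnegative by monotonicity and the second is $O(\lambda_n+\lambda_m)$ on the set where the a priori bound holds. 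Substituting this decomposition for your naive splitting would close the argument; as stated, the step is not justified.
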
  

Before giving the proof, we should obtain an a priori bound for the solution $X^g_t$.

\begin{lem}\label{apriorilem}
Let the assumptions of Theorem \ref{skeletonthm} be satisfied and $X^g$ be the solution of (\ref{maineq}). Then the solution is uniformly bounded, 
\begin{align}
\sup_{0\leq t\leq T}\norm{X^g_t}<\infty.\label{apriori}
\end{align}
\end{lem}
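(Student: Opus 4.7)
The plan is to apply the deterministic energy-type inequality (Proposition with contraction semigroup, invoking the remark after Hypothesis 2 to take $\lambda=0$ and $L=1$ without loss of generality) directly to equation (\ref{maineq}) with
\[
a(s)=f(X^g_s)+\int_{\mathbf{X}}G(s,X^g_s,v)(g(s,v)-1)\nu(dv),
\]
which yields
\[
\norm{X^g_t}^2\leq\norm{X_0}^2+2\int^t_0\inp{X^g_s,f(X^g_s)}ds+2\int^t_0\int_{\mathbf{X}}\inp{X^g_s,G(s,X^g_s,v)}(g(s,v)-1)\nu(dv)\,ds.
\]
From here, the remaining work is to absorb each term into a Gronwall-ready form.

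For the drift term I would write $\inp{X^g_s,f(X^g_s)}=\inp{f(X^g_s)-f(0),X^g_s-0}+\inp{f(0),X^g_s}$ and use Hypothesis 2(iv) semimonotonicity together with the elementary inequality $2ab\leq a^2+b^2$ to get $2\inp{X^g_s,f(X^g_s)}\leq(2M+1)\norm{X^g_s}^2+\norm{f(0)}^2$. For the control-perturbation term I would bound the inner product by Cauchy-Schwarz and then use the definition of the norm $\norm{\cdot}_0$, namely $\norm{G(s,x,v)}\leq\norm{G(s,v)}_0(1+\norm{x}),$ to obtain
\[
2\int_{\mathbf{X}}\inp{X^g_s,G(s,X^g_s,v)}(g(s,v)-1)\nu(dv)\leq 2\norm{X^g_s}(1+\norm{X^g_s})\,h(s)\leq (1+3\norm{X^g_s}^2)h(s),
\]
where $h(s):=\int_{\mathbf{X}}\norm{G(s,v)}_0\abs{g(s,v)-1}\nu(dv)$.

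The crucial input from the preliminaries is that, since $g\in\mathbb{S}$ lies in some $S^N$, Lemma \ref{gint} guarantees $\int^T_0 h(s)\,ds<\infty$. Combining all the estimates gives
\[
\norm{X^g_t}^2\leq C_1+\int^t_0\bigl((2M+1)+3h(s)\bigr)\norm{X^g_s}^2 ds,
\]
with $C_1:=\norm{X_0}^2+T\norm{f(0)}^2+\int^T_0 h(s)\,ds<\infty$. Applying Gronwall's lemma with $f(s)=\norm{X^g_s}^2$ and $g(s)=(2M+1)+3h(s)$ then yields
\[
\sup_{0\leq t\leq T}\norm{X^g_t}^2\leq C_1\exp\!\Bigl(\int^T_0\bigl((2M+1)+3h(s)\bigr)ds\Bigr)<\infty,
\]
which is precisely (\ref{apriori}).

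The only delicate step is the control-term estimate: one must recognize that the demicontinuous nonlinearity $f$ is handled by semimonotonicity (not Lipschitz), while the control perturbation is tamed by factoring $G$ through the $\norm{\cdot}_0$-norm so that the $L^1$-type bound of Lemma \ref{gint} can be invoked. Everything else is routine energy estimation followed by Gronwall.
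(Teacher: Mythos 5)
Your proof is correct and follows essentially the same route as the paper: apply the energy-type inequality, absorb each term into a Gronwall-ready form, and invoke Lemma \ref{gint} to make the integral of $h$ finite. The only variation is cosmetic — you control the drift term via semimonotonicity of $-f$ together with $\norm{f(0)}<\infty$, whereas the paper uses the linear growth condition (Hypothesis 2(iii)) directly and then the elementary inequality $x+x^2\leq 1+2x^2$; both are valid given the hypotheses and lead to the same Gronwall conclusion.
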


\begin{proof}
We use the energy type inequality to get 
\begin{align}
\norm{X^g_t}^2&\leq \norm{X_0}^2+ 2\int^t_0 \inp{X^g_s,f(X^g_s)+\int_{\mathbf{X}}G(s,X^g_s,v)(g(s,v)-1)
\nu(dv)}ds\nonumber\\
&\leq \norm{X_0}^2+ 2\int^t_0(1+C\norm{X^g_s}^2)ds\nonumber\\
&\qquad +2\int^t_0(1+2\norm{X^g_s}^2)\int_{\mathbf{X}}\norm{G(s,v)}_0(g(s,v)-1)
\nu(dv)ds\nonumber
\end{align}
We have used the linear growth condition on $f$ and the elementary inequality $x+x^2\leq 1+2x^2$ in the second inequality. Denoting $M=\int_{\mathbf{X_T}}\norm{G(s,v)}_0(g-1)\nu(dv)ds$ and $g(s)=\int_{\mathbf{X}}\norm{G(s,v)}_0(g-1)\nu(dv)$,  the Gronwall's lemma implies   
\begin{align}
\norm{X^g_t}^2\leq ae^{\int^t_0g(s)ds},\nonumber
\end{align} 
in which $a=\norm{X_0}^2+2t+2M$. Then applying Lemma \ref{gint} in estimating $\int^t_0g(s)ds$  implies the desired estimate. Note that this argument gives  a uniform bound for all $g\in S^N$.
\end{proof}

We should notice that the positive constants $C, M$ in our arguments come from the assumptions and may change from line to line.\\

\textbf{\textit{Proof of Theorem \ref{skeletonthm}.}}
For a bounded function $g:\mathbf{X}_T\rightarrow [0,\infty)$, the Equation (\ref{maineq}) is a  deterministic equation with monotone nonlinearity which has been studied in \cite{kn:Br}. For an arbitrary $g\in \mathbb{S}$, let $g^n=g1_{g^{-1}[-n,n]}$ be the truncated function. Note that since $L_T(g)\leq N$ for some $N<\infty$, there exists a $K>0$ such that for all $n>N$ and outside of a set $A\subset \mathbf{X}_T$ with 
$\mu(A)<\epsilon$, we have  $g^n=g$. 

From the definitions we write 
\begin{align}
X^{g^n}_t-X^{g^m}_t=&\int^t_0S(t-s)(f(X^{g^n}_s)-f(X^{g^m}_s))ds\nonumber\\
&+\int^t_0\int_{\mathbf{X}}S(t-s)[G(s,X^{g^n}_s,v)-G(s,X^{g^m}_s)]g^n(s,v)\nu(dv)ds\nonumber\\
&+\int^t_0\int_{\mathbf{X}}S(t-s)G(s,X^{g^m}_s,v)[g^n(s,v)-g^m(s,v)]
\nu(dv)ds.\nonumber
\end{align}
Due to the energy-type inequality we have 
\begin{align}
&\norm{X^{g^n}_t-X^{g^m}_t}^2\nonumber\\
 &\hspace{1cm}\leq\int^t_0\inp{X^{g^n}_s-X^{g^m}_s,f(X^{g^n}_s)-f(X^{g^m}_s)}ds
\nonumber\\
&\hspace{1.3cm}+\int^t_0\inp{X^{g^n}_s-X^{g^m}_s,\int_{\mathbf{X}}[G(s,X^{g^n}_s,v)-G(s,X^{g^m}_s)]g^n(s,v)\nu(dv)}ds\nonumber\\
&\hspace{1.3cm}+\int^t_0\inp{X^{g^n}_s-X^{g^m}_s,\int_{\mathbf{X}}G(s,X^{g^m}_s,v)[g^n(s,v)-g^m(s,v)]
\nu(dv)}ds\nonumber\\
&\hspace{1cm}\leq I_1(t)+I_2(t)+I_3(t).\nonumber
\end{align}
By using Hypothesis 2(iv) we have 
$$I_1(t)\leq M\int^t_0 \norm{X^{g^n}_s-X^{g^m}_s}^2ds,$$
Moreover, for the term $I_2(t)$ we have 
\begin{align}
I_2(t)\leq & \int^t_0 \norm{X^{g^n}_s-X^{g^m}_s}^2\int_{\mathbf{X}}\norm{G(s,v)}_1g^n(s,v)\nu(dv)ds.\label{I2}
\end{align}
Finally, by the Causchy-Schwartz inequality we get 
\begin{align}
I^2_3(t)&\leq \int^t_0 \norm{X^{g^n}_s-X^{g^m}_s}^2ds \int^t_0\left\{\int_{\mathbf{X}}G(s,X^{g^m}_s,v)[g^n(s,v)-g^m(s,v)]\nu(dv)\right\}^2ds\nonumber\\
&\leq \int^t_0 \norm{X^{g^n}_s-X^{g^m}_s}^2ds \times\nonumber\\
&\qquad\int^t_0(1+\norm{X^{g^m}_s})^2\left\{\int_{\mathbf{X}}\norm{G(s,v)}_0[g^n(s,v)-g^m(s,v)]\nu(dv)\right\}^2ds.\nonumber
\end{align}
Then by using Lemma \ref{hcon} and the a priori estimate, the second integral at the right hand side tends to zero. Therefore, due to the above estimates for $I_1, I_2, I_3$, using Remark in the estimate (\ref{I2}) and applying Gronwall's inequality we conclude that 
$\norm{X^{g^n}-X^{g^m}}_{\infty}$ converges to  zero. This means that $X^{g^n}$ is a Cuachy sequence in the space 
$\mathcal{D}([0,T];H)$ and therefore converges to a limit, named $X^g$. To complete the proof, we have to show that $X^g$ is a solution of (\ref{maineq}). To this end, we must prove the following convergences, as $n\rightarrow \infty$
\begin{align}
\int^t_0S(t-s)f(X^{g^n}_s)ds\rightarrow \int^t_0S(t-s)f(X^g_s)ds,\label{fcon}
\end{align} 

and
\begin{align}
\int^t_0S(t-s)\int_{\mathbf{X}}G(s,X^{g^n}_s,v)(g^n(s,v)-1)\nu(dv)ds\nonumber\\
\qquad \rightarrow \int^t_0S(t-s)\int_{\mathbf{X}}G(s,X^g_s,v)(g(s,v)-1)\nu(dv)ds.\label{gcon}
\end{align}
Due to the demicontinuity of $f$, we have for every fixed $h\in H$ and $s\in [0,T]$, as $n\rightarrow \infty$ 
$$\inp{S(t-s)\left(f(X^{g^n}_s)-f(X^g_s)\right),h}=\inp{f(X^{g^n}_s)-f(X^g_s),S^*(t-s)h}\rightarrow  0.$$
From the a priori estimate and linear growth condition on $f$ the sequence 
$\{S(t-s)\left(f(X^{g^n}_s)-f(X^g_s)\right), \; n\in\mathbb{N}\}$ is uniformly bounded. Therefore, using the dominated convergence theorem we get 
\begin{align}
&\inp{\int^t_0S(t-s)\left(f(X^{g^n}_s)-f(X^g_s)\right)ds,h}\nonumber\\
&=\int^t_0\inp{S(t-s)\left(f(X^{g^n}_s)-f(X^g_s)\right),h}ds\rightarrow 0.\nonumber
\end{align}
For the other term, similar to the estimates obtained for $I_2, I_3$, we have 
\begin{align}
\int^t_0 &\int_{\mathbf{X}}S(t-s)[G(s,X^{g^n}_s,v)(g^n-1)-G(s,X^g_s,v)(g-1)]\nu(dv)ds\nonumber\\
&= \int^t_0\int_{\mathbf{X}}S(t-s)[G(s,X^{g^n}_s,v)-G(s,X^g_s,v)](g^n-1)\nu(dv)ds\nonumber\\
&+\int^t_0 \int_{\mathbf{X}}S(t-s)G(s,X^g_s,v)[g^n-g]\nu(dv)ds\nonumber\\
&\leq\norm{X^{g^n}-X^g}_{\infty} \int^t_0  \int_{\mathbf{X}}S(t-s)\norm{G(s,v)-G(s,v)}_1(g^n-1)\nu(dv)ds\nonumber\\
&+(1+\norm{X^g}_{\infty})\int^t_0 \int_{\mathbf{X}}S(t-s)\norm{G(s,v)}_0[g^n-g]\nu(dv)ds.\nonumber
\end{align}
The convergence $\norm{X^{g^n}-X^g}_{\infty}\rightarrow 0$ and Remark \ref{gintrem} imply that the first integral converges  to zero. Similarly, Lemma \ref{hcon} and a priori estimate imply this assertion for the second integral. So, from the two convergences (\ref{fcon}), (\ref{gcon}) we can conclude that $X^g$ is a solution of Equation (\ref{maineq}). $\hspace*{3.7cm} \square $

\begin{rem}
The uniqueness of solution of Equation (\ref{maineq}) can easily be proved. But this property is not required in our arguments. 
\end{rem}


\subsection{\textbf{The Rate Function}}
In this section we prove that the rate function given by (\ref{rate}) is a good rate function. In the other words, we verify the condition $(A1)$ in Definition 1.1. In this way we have to deal with some technical difficulties. We apply the Yosida approximation to take advantage of strong solution properties and integration by part formulae. 
\begin{thm}\label{h2}
The level sets of rate function given by (\ref{rate}) are compact or equivalently the set 
$$K_N=\{\mathcal{G}^0(g):g\in \mathbf{S}^N\} $$
is a compact subset of $\mathcal{C}([0,T];H)$.
\end{thm}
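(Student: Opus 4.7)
The plan is to exploit the compactness of $S^N$ (established in Subsection 2.2) and reduce the claim to the continuity of the map $g \mapsto X^g = \mathcal{G}^0(\nu^g_T)$ from $S^N$ into $\mathcal{C}([0,T];H)$, so that $K_N$ appears as the continuous image of a compact set. Each $X^g$ does lie in $\mathcal{C}([0,T];H)$ rather than merely in $\mathcal{D}([0,T];H)$, since the skeleton equation (\ref{skeleton}) involves only Bochner integrals and no jump term. I would work throughout under the standing simplification $\lambda = 0$, $L=1$ permitted by the remark following Hypothesis 2.

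Given a sequence $g^n \to g$ in $S^N$, set $Y^n_t = X^{g^n}_t - X^g_t$. Splitting the difference of the driving terms as
\begin{align*}
G(s,X^{g^n}_s,v)(g^n-1) - G(s,X^g_s,v)(g-1) &= [G(s,X^{g^n}_s,v) - G(s,X^g_s,v)](g^n-1)\\
&\quad + G(s,X^g_s,v)(g^n-g),
\end{align*}
and applying the energy-type inequality to $Y^n$ produces three inner-product integrals. I would control them as follows: (i) the drift term by $2M\int_0^t \norm{Y^n_s}^2\,ds$ using semimonotonicity (Hypothesis 2(iv)); (ii) the first ``$A^n$''-term (involving $g^n - 1$), via the Lipschitz property of $G$ and Cauchy--Schwarz, by $2\int_0^t \norm{Y^n_s}^2 \varphi_n(s)\,ds$, where $\varphi_n(s) = \int_{\mathbf{X}} \norm{G(s,v)}_1 |g^n(s,v)-1|\,\nu(dv)$ has its time integral bounded uniformly in $n$ by Remark \ref{gintrem}; and (iii) the second ``$B^n$''-term (involving $g^n - g$), via Young's inequality together with the linear growth $\norm{G(s,x,v)} \leq (1+\norm{x})\norm{G(s,v)}_0$ and the a priori bound of Lemma \ref{apriorilem}, by $\int_0^t \norm{Y^n_s}^2\,ds + C\beta_n$, where
$$\beta_n := \int_0^T \Bigl(\int_{\mathbf{X}} \norm{G(s,v)}_0\,|g^n(s,v)-g(s,v)|\,\nu(dv)\Bigr)^{2}\,ds.$$
Provided $\beta_n \to 0$, Gronwall's lemma with the (uniformly integrable) coefficient $2M + 1 + 2\varphi_n$ yields $\sup_{t\leq T}\norm{Y^n_t}^2 \leq C'\beta_n \to 0$, establishing continuity and hence compactness of $K_N$.

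The principal obstacle is the verification that $\beta_n \to 0$. Lemma \ref{hcon} delivers the \emph{scalar} convergence $\int_{\mathbf{X}_T} h(s,v)(g^n-g)\,\nu_T \to 0$ for $h$ with the required exponential integrability, whereas $\beta_n$ features a squared integral against the unsigned expression $|g^n - g|$, so Lemma \ref{hcon} does not apply directly. I would proceed in the same spirit as the closing step in the proof of Theorem \ref{skeletonthm}: truncate $\norm{G(s,v)}_0$ at a height $K$, use Lemma \ref{hcon} on the truncated integrand to handle the low-intensity part, and use Hypothesis 3 together with the uniform constraint $L_T(g^n) \leq N$ to dominate the high-intensity tail uniformly in $n$; sending $K \to \infty$ after $n \to \infty$ then gives $\beta_n \to 0$ and completes the proof.
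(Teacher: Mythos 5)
Your overall strategy (compactness of $S^N$ plus continuity of $g\mapsto X^g$) matches the paper's, but your proof of continuity takes a direct energy-inequality route, whereas the paper deliberately detours through the Yosida approximation and an integration-by-parts argument. That detour is not decorative: it is there precisely because the direct route fails at the point you flag.

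The gap is in the claim $\beta_n\to 0$. The topology on $S^N$ is the vague topology induced by the identification $g\mapsto\nu^g_T$, so $g^n\to g$ in $S^N$ only gives convergence of \emph{signed} integrals $\int_{\mathbf{X}_T}h\,(g^n-g)\,\nu_T\to 0$, which is exactly the conclusion of Lemma~\ref{hcon}. It does \emph{not} give $\int_{\mathbf{X}_T}h\,\abs{g^n-g}\,\nu_T\to 0$. For instance, take $\nu_T$ equal to Lebesgue measure on $[0,1]^2$ and $g^n(t,v)=1+\tfrac12\sin(nt)$, $g\equiv 1$: then $L_T(g^n)$ is uniformly bounded, $g^n\to g$ in $S^N$ (Riemann--Lebesgue), but $\int\abs{g^n-g}\,\nu_T\to \tfrac{1}{\pi}>0$. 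Since $\beta_n$ is built from $\abs{g^n-g}$, it need not vanish, and the Gronwall step collapses. Your proposed truncation fix removes the unboundedness of $\norm{G(s,v)}_0$ (which is what Hypothesis~3 and the $L_T(g^n)\leq N$ bound control), but it does not touch the real obstruction: even with a \emph{bounded} kernel $h$, $\int h\,\abs{g^n-g}\,\nu_T$ does not converge to zero under $S^N$ convergence, because Lemma~\ref{hcon}'s conclusion is intrinsically signed.

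This is exactly why the paper does not estimate the ``cross'' term $\int_0^t\inp{Y^n_s,\int_{\mathbf{X}}G(s,X^g_s,v)(g^n-g)\nu(dv)}\,ds$ pointwise in $s$. Instead it passes to the Yosida approximants $X^{n,k}$ (so that strong differentiability is available), integrates by parts in $t$, and thereby trades the non-small integrand against $(g^n-g)$ for its time-antiderivative $h^{n,k}(t)=\int_0^t\int_{\mathbf{X}}G(\cdot)(g^n-g)\,\nu\,ds$, whose \emph{uniform} norm is small by the signed Lemma~\ref{hcon}. The Cauchy--Schwarz/Young step in your argument is what forces the absolute value; if you want a direct proof you would need to avoid squaring out that term and instead keep it as a bilinear pairing against $Y^n$, which in turn leads naturally to integration by parts and the need for strong solutions — i.e., back to the paper's route. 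As written, the proposal does not close.
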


\begin{proof}
It suffices to show that for an arbitrary sequence $\{g^n, n\in\mathbb{N}\}\subset S^N$,  $X^n=\mathcal{G}^0(g^n)$ has a convergence subsequent in the strong topology of 
$\mathcal{C}([0,T];H)$. We proved this convergence in previous section, when $\{g^n\}$ converges strongly. 

Since $S^N$ is weakly compact (cf. appendix in \cite{kn:BCD}), 
we can assume that $\{g^n\}$ has a subsequent, call it for simplicity in notation $\{g^n\}$, that  converges weakly to $g^0$. We apply the Yosida approximation to prove 
the desired convergence indirectly. 

Let $A_k=A(I-k^{-1}A)^{-1}, k\in \mathbb{N}$ be the Yosida approximation of $A$, which is a bounded operator. Then the following equation has the strong solution
\begin{align}
\frac{d}{dt}X^{n,k}_t=A_kX^{n,k}_t+f(X^{n,k}_t)+\int_{\mathbf{X}}G(t,X^{n,k}_t,v)(g(t,v)-1)\nu(dv),\nonumber
\end{align}
which can be represented in the form 
\begin{align}
X^{n,k}_t=S(t)X_0&+\int^t_0e^{(t-s)A_k}f(X^{n,k}_s)ds\nonumber\\
&+\int^t_0\int_{\mathbf{X}}e^{(t-s)A_k}G(s,X^{n,k}_s,v)(g^n(s,v)-1)
\nu(dv)ds.\nonumber
\end{align}
For a fixed $n$ we show that $\lim_{k\rightarrow \infty}\norm{X^{n,k}-X^n}_{\infty}=0$. From the definitions we have 
\begin{align}
&X^n_t-X^{n,k}_t\nonumber\\
\hspace{.2cm}&=\int^t_0\left(S(t-s)-e^{(t-s)A_k}\right)\left[f(X^n_s)+\int_{\mathbf{X}}G(s,X^n_s,v)(g^n-1)\nu(dv)\right] ds\nonumber\\
\hspace{0.3cm}&+\int^t_0e^{(t-s)A_k}\left[f(X^n_s)-f(X^{n,k}_s)+\int_{\mathbf{X}}[G(s,X^n_s,v)-G(s,X^{n,k}_s,v)](g^n-1)\nu(dv)\right] ds\nonumber\\
\hspace{0.3cm}&=:I^{n,k}_t+J^{n,k}_t.\label{first}
\end{align}
To estimate $\norm{I^{n,k}}_{\infty}$, we define 
\begin{align}
\lambda^k(s):=\sup_{t\in [s,T]}\norm{\left(S(t-s)-e^{(t-s)A_k}\right)[f(X^n_s)+\int_{\mathbf{X}}G(s,X^n_s,v)(g^n-1)\nu(dv)]}.\label{lambda}
\end{align}
We assumed without loss of generality that $S(t)$ is a contraction which implies that 
\begin{align}
\norm{S(t-s)-e^{(t-s)A_k}}_{L}\leq 2,\label{yos1}
\end{align}
where $\norm{\cdot}_{L}$ stands for the operator norm. Now, using the a priori estimate, linear growth condition on $f$ and Lemma \ref{gint}, we can conclude that    
$$\int^T_0\sup_{k\in\mathbb{N}}\lambda^k(s)ds<\infty.$$
So, if we prove that  $\lambda^k$ converges to zero pointwise, we can conclude, by using the Lebesgue dominated convergence theorem, that 
\begin{align}
\lim_{k\rightarrow \infty}\int^T_0\lambda^k(s)ds=0.\label{lambda}
\end{align}
Moreover, from the definition of $\lambda^k$ we obtain $\norm{I^{n,k}_t}\leq \int^t_0\lambda^k_sds$. Therefore the convergence (\ref{lambda}) would imply the desired convergence:    
\begin{align}
\lim_{k\rightarrow \infty}\norm{I^{n,k}}_{\infty}=0.\label{I}
\end{align}

From the Yosida approximation properties (cf. \cite{kn:KA}), we have for every $h\in H$
\begin{align}
\lim_{k\rightarrow \infty}(e^{tA_k}-S(t))h=0.\label{yos2}
\end{align}
We fix $s\in[0,T]$ and define the family of functions $\{\gamma^{k}, k\in\mathbb{N} \}$ on the interval $[s,T]$ by 
$$\gamma^{k}(t):=\left(S(t-s)-e^{(t-s)A_k}\right)[f(X^n_s)+\int_{\mathbf{X}}G(s,X^n_s,v)(g^n-1)\nu(dv)].$$
The inequality (\ref{yos1}) easily implies that the above family is equicontinuous. So, according to the Arzela-Ascoli theorem and (\ref{yos2}) we can conclude that for every $s\in [0,T]$, this family converges uniformly to zero on the interval $[s,T]$. But this is equivalent to the desired pointwise convergence 
$$\lim_{k\rightarrow\infty}\lambda^{k}(s)=0, \qquad \forall s\in [0,T],$$
that ascertains the convergence (\ref{I}).

To estimate the term $\norm{J^{n,k}_t}$, we use the energy type inequality and write 
\begin{align}
\norm{J^{n,k}_t}^2 &=\norm{-I^{n,k}_t+X^{n}_t-X^{n,k}_t}^2\nonumber\\
&\leq 2\int^t_0\inp{-I^{n,k}_t+X^n_s-X^{n,k}_s,f(X^n_s)-f(X^{n,k}_s)}ds\nonumber\\
&+2\int^t_0\inp{-I^{n,k}_t+X^n_s-X^{n,k}_s,\int_{\mathbf{X}}[G(s,X^n_s,v)-G(s,X^{n,k}_s,v)](g^n-1)\nu(ds)}ds\nonumber\\
&\leq 2\int^t_0\norm{I^{n,k}_s}\norm{f(X^n_s)-f(X^{n,k}_s)} ds\nonumber\\
&+\int^t_0\norm{I^{n,k}_s}\norm{X^n_s-X^{n,k}_s}\int_{\mathbf{X}} \norm{G(s,v)}_1(g^n(s,v)-1)\nu(dv)ds\nonumber\\
&+\int^t_0M\norm{X^n_s-X^{n,k}_s}^2ds+\int^t_0\norm{X^n_s-X^{n,k}_s}^2
\int_{\mathbf{X}}\norm{G(s,v)}_1(g^n-1)\nu(dv)ds.\label{J}
\end{align}
Due to the convergence (\ref{I}) the first and second terms converge to zero. So, regarding (\ref{I}) we can write 
$$\norm{X^n_t-X^{n,k}_t}^2\leq \delta+\int^t_0\left(M+g(s)\right)\int^t_0\norm{X^n_s-X^{n,k}_s}^2ds,$$
where $g(s)=\int_{\mathbf{X}}\norm{G(s,v)}_1(g^n(s,v)-1)\nu(dv)$ and $\delta>0$ is arbitrary. From Remark \ref{gintrem}, we have $\int^T_0g(s)ds<\infty$. Therefore the Gronwall's inequality yields for every $n\geq 0$
\begin{align}
\lim_{k\rightarrow\infty} \norm{X^n-X^{n,k}}_{\infty}=0.\label{fs}
\end{align}
\vspace{.5cm}

In the second step, we show the following uniform convergence 
\begin{align}
\lim_{n\rightarrow \infty}\sup_{k\in\mathbb{N}}\norm{X^{n,k}-X^{0,k}}_{\infty}=0.\label{ucon}
\end{align}
According to the assumptions, the semigroup $S(t)$ is a contraction which implies that $e^{tA_k}$ is a contraction, too. So, by energy type inequality we get

\begin{align}
&\norm{X^{n,k}_t-X^{0,k}_t}^2\nonumber\\
&\leq 2\int^t_0\inp{X^{n,k}_s-X^{0,k}_s,f(X^{n,k}_s)-f(X^{0,k}_s)}ds\nonumber\\
\hspace{0.5cm}&+ 2\int^t_0\inp{X^{n,k}_s-X^{0,k}_s,\int_{\mathbf{X}}[G(s,X^{n,k}_s,v)-G(s,X^{0,k}_s,v)](g^n-1)\nu(dv)}ds\nonumber\\
&+2\int^t_0\inp{X^{n,k}_s-X^{0,k}_s,\int_{\mathbf{X}}G(s,X^{0,k}_s,v)[g^n(s,v)-g^0(s,v)]\nu(dv)}ds\nonumber\\
&\leq M\int^t_0\norm{X^{n,k}_s-X^{0,k}_s}^2ds\nonumber\\
&+2\int^t_0\norm{X^{n,k}_s-X^{0,k}_s}^2\int_{\mathbf{X}}\norm{G(s,v)}_1|g^n(s,v)-1|\nu(dv)ds\nonumber\\
&+2\int^t_0\inp{X^{n,k}_s-X^{0,k}_s,\int_{\mathbf{X}}G(s,X^{0,k}_s,v)[g^n(s,v)-g^0(s,v)]\nu(dv)}ds.\label{step2}
\end{align}
It seems that the integration by part formula is required in estimating the last term. To this end, we define 

\begin{align}
h^n(t):=\int^t_0\int_{\mathbf{X}}G(s,X^{n}_s,v)\abs{g^n-g^0}\nu(dv)ds,\nonumber
\end{align}
and
\begin{align}
h^{n,k}(t):=\int^t_0\int_{\mathbf{X}}G(s,X^{n,k}_s,v)\abs{g^n-g^0}\nu(dv)ds.\nonumber
\end{align}
From Lemma \ref{hcon}, we have 
\begin{align}
\lim_{n\rightarrow\infty}&\int^T_0\int_{\mathbf{X}}\frac{\norm{G(s,X^{n}_s,v)}}{1+\norm{X^{n}_s}}\abs{g^n-g^0}\nu(dv)ds\nonumber\\
&\leq\lim_{n\rightarrow\infty}\int^t_0\int_{\mathbf{X}}\norm{G(s,v)}_0\abs{g^n-g^0}\nu(dv)ds=0.\nonumber
\end{align}
So, regarding the a priori estimate $\sup_n \norm{X^n}_{\infty}<\infty$,  we get
$$\lim_{n\rightarrow\infty}\int^T_0\int_{\mathbf{X}}\norm{G(s,X^{n}_s,v)}\abs{g^n-g^0}\nu(dv)ds=0.$$
This implies 
\begin{align}
\lim_{n\rightarrow\infty}\norm{h^{n}}_{\infty}=0.\label{hnormcon}
\end{align}
Furthermore, we have 
\begin{align}
h^{n,k}(t)-h^n(t)&=\int^t_0\int_{\mathbf{X}}[G(s,X^{n,k}_s,v)-G(s,X^{n}_s,v)]\abs{g^n-g^0}\nu(dv)ds\nonumber\\
&\leq\norm{X^{n,k}-X^{n}}_{\infty}\int^t_0\int_{\mathbf{X}}\norm{G(s,v)}_1\abs{g^n-g^0}\nu(dv)ds.\nonumber
\end{align}
Considering the proof of Lemma \ref{apriorilem}, we easily obtain  the uniform estimate  $$\sup_{n,k}\norm{X^{n,k}-X^{n}}_{\infty}<\infty.$$
This together with Lemma \ref{hcon} imply that
$$\lim_{n\rightarrow \infty}\sup_{k\in\mathbb{N}}\norm{h^{n,k}-h^n}_{\infty}=0.$$
So, we can conclude from (\ref{hnormcon}) that
\begin{align}
\lim_{n\rightarrow \infty}\sup_{k\in\mathbb{N}}\norm{h^{n,k}}_{\infty}=0.\label{hucon}
\end{align}
Now, using the integration by parts formula for the strong solution, we get  
\begin{align}
\int^t_0 &\inp{X^{n,k}_s-X^{0,k}_s,\int_{\mathbf{X}}G(s,X^{0,k}_s,v)(g^n(s,v)-g^0(s,v))\nu(dv)}ds\nonumber\\
&=\inp{X^{n,k}_t-X^{0,k}_t,h^{n,k}_t}-\int^t_0\inp{(X^{n,k}_s-X^{0,k}_s)^{\prime},h^{n,k}_s}ds\nonumber\\
&\leq \norm{X^{n,k}_t-X^{0,k}_t}\norm{h^{n,k}_t}-\int^t_0\inp{(X^{n,k}_s-X^{0,k}_s)^{\prime},h^{n,k}_s}ds.\label{bypart}
\end{align}
Since $X^{n,k}_s-X^{0,k}_s$ is a strong solution, we have
\begin{align}
\int^t_0 &\inp{(X^{n,k}_s-X^{0,k}_s)^{\prime},h^{n,k}_s}ds\nonumber\\
=&\int^t_0\inp{A_k(X^{n,k}_s-X^{0,k}_s)+f(X^{n,k}_s)-f(X^{0,k}_s),h^{n,k}_s}\nonumber\\
&+\int^t_0\inp{\int_{\mathbf{X}}G(s,X^{n,k}_s,v)(g^n(s,v)-1)\nu(dv),h^{n,k}_s}ds\nonumber\\
&-\int^t_0\inp{\int_{\mathbf{X}}G(s,X^{0,k}_s,v)(g^0(s,v)-1)\nu(dv),h^{n,k}_s}ds\nonumber\\
\leq & \norm{h^{n,k}}_{\infty}\int^t_0\{2\norm{X^{n,k}_s-X^{0,k}_s}+\norm{f(X^{n,k}_s)}+\norm{f(X^{0,k}_s)}ds\}\nonumber\\
&+\norm{h^{n,k}}_{\infty}\int^t_0(1+\norm{X^{n,k}_s})\int_{\mathbf{X}}\norm{G(s,v)}_0
\abs{g^n(s,v)-1}\nu(dv)ds\nonumber\\
&+\norm{h^{n,k}}_{\infty}\int^t_0(1+\norm{X^{0,k}_s})\int_{\mathbf{X}}\norm{G(s,v)}_0
\abs{g^0(s,v)-1}\nu(dv)ds.\nonumber
\end{align}
Due to the uniform convergence (\ref{hucon}), the uniform a priori estimate  and Lemma \ref{gint} the right hand side tends to zero uniformly for $k\in\mathbb{N}$, as $n\rightarrow\infty$. So, using the inequalities (\ref{step2}), (\ref{bypart}) and the Gronwal's lemma we can conclude the uniform convergence (\ref{ucon}).  

In view of the convergence (\ref{fs}), we have $\lim_{k\rightarrow\infty} \norm{X^0-X^{0,k}}_{\infty}=0$. Therefore from the convergences (\ref{fs}) and (\ref{ucon}) and by applying a  $3\eps$-argument we obtain the desired convergence 
$$\lim_{n\rightarrow\infty} \norm{X^n-X^0}_{\infty}=0.$$
\end{proof}



\subsection{\textbf{Weak Convergence of Solution}}
In this section we check the main assumption of Theorem \ref{mainthm} and prove that the map $\mathcal{G}^\eps$ is continuous w.r.t. the control function $g^{\eps}$. In our argument we need an a priori estimate for the mild solution of Equation (\ref{maineps}). Although this estimate can be concluded indirectly by \cite[Theorem 10]{kn:SZ}, We prove this estimate in the following proposition by a simple argument.
\begin{prop}
Let $\mathbb{E}\norm{X_0}^2<\infty$ and $\{g^{\eps}, \eps>0\}\subset S^N$ be a family of control functions. Consider $\mathcal{G}^{\eps}(\eps N^{\eps^{-1} g^{\eps}})=X^{\eps}$ as the mild solution of Equation (\ref{maineps}) with initial value $X_0$, or
\begin{align}
X^{\eps}_t=S(t-s)X_0&+\int^t_0S(t-s)f(X^{\eps}_s))ds\nonumber\\
&+\int^t_0\int_{\mathbf{X}}S(t-s)G(s,X^{\eps}_{s-},v)\left(\eps N^{\eps^{-1} g^{\eps}}(dvds)-\nu(dv)ds\right).\nonumber
\end{align}
Then we have the following a priori bound 
\begin{align}
\sup_{\eps>0}\mathbb{E}\left(\sup_{0\leq t\leq T}\norm{X^{\eps}_t}^2\right)<\infty. \label{aprioristoch}
\end{align}
\end{prop}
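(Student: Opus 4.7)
The plan is to reduce the stochastic equation to a semimartingale convolution amenable to the It\^o-type inequality by decomposing the noise into its predictable drift and purely discontinuous martingale parts:
$$\eps N^{\eps^{-1}g^\eps}(dv\,ds) - \nu(dv)\,ds \;=\; \eps\,\tilde N^{\eps^{-1}g^\eps}(dv\,ds) \,+\, (g^\eps(s,v)-1)\,\nu(dv)\,ds,$$
where $\tilde N^{\eps^{-1}g^\eps}$ denotes the compensated measure with intensity $\eps^{-1}g^\eps\,\nu\otimes\lambda_T$. Writing the driving semimartingale $Z$ accordingly, $X^\eps_t = S(t)X_0 + \int_0^t S(t-s)\,dZ_s$ with $dM^\eps_s = \eps\int_{\mathbf{X}} G(s,X^\eps_{s-},v)\,\tilde N^{\eps^{-1}g^\eps}(dv\,ds)$ as the martingale part. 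Since the Remark after Hypothesis 2 allows $\lambda=0$, the It\^o-type inequality will give
$$\norm{X^\eps_t}^2 \leq \norm{X_0}^2 + 2\int_0^t \inp{X^\eps_s, f(X^\eps_s)}\,ds + 2\int_0^t\!\!\inp{X^\eps_s, \int_{\mathbf{X}} G(s,X^\eps_s,v)(g^\eps-1)\nu(dv)}ds + 2\int_0^t\!\inp{X^\eps_{s-}, dM^\eps_s} + [M^\eps]_t.$$

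Next, I would estimate the drift terms: Hypothesis 2(iii) together with Cauchy--Schwarz yields $|\inp{X^\eps_s, f(X^\eps_s)}| \leq C(1+\norm{X^\eps_s}^2)$, and the definition of $\norm{G(s,v)}_0$ gives an analogous bound for the compensator term involving the weight $\varphi^\eps(s) := \int_{\mathbf{X}}\norm{G(s,v)}_0 |g^\eps(s,v)-1|\nu(dv)$. The expectation of the quadratic variation is
$$\mathbb{E}[M^\eps]_t = \eps\,\mathbb{E}\int_0^t\!\!\int_{\mathbf{X}}\norm{G(s,X^\eps_s,v)}^2 g^\eps(s,v)\,\nu(dv)\,ds \leq 2\eps\,\mathbb{E}\int_0^t(1+\norm{X^\eps_s}^2)\,\psi^\eps(s)\,ds,$$
with $\psi^\eps(s) := \int_{\mathbf{X}}\norm{G(s,v)}_0^2 g^\eps(s,v)\nu(dv)$. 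Crucially, Lemma \ref{gint} furnishes $\eps$-uniform $L^1([0,T])$ bounds on both $\varphi^\eps$ and $\psi^\eps$ over $g^\eps \in S^N$.

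The main obstacle is the stochastic integral $\int_0^t \inp{X^\eps_{s-}, dM^\eps_s}$: since $X^\eps$ is only c\`adl\`ag, Lemma \ref{lem1} does not directly apply. I would instead invoke the Burkholder--Davis--Gundy inequality to obtain
$$\mathbb{E}\sup_{0\leq \theta \leq t}\left|\int_0^\theta\inp{X^\eps_{s-},dM^\eps_s}\right| \leq C\,\mathbb{E}\left(\int_0^t \norm{X^\eps_{s-}}^2 d[M^\eps]_s\right)^{1/2} \leq C\,\mathbb{E}\bigl(\sup_{s\leq t}\norm{X^\eps_s}\cdot [M^\eps]_t^{1/2}\bigr),$$
then apply Young's inequality with a small constant $K$ to split this as $\tfrac{1}{4}\mathbb{E}\sup_{s\leq t}\norm{X^\eps_s}^2 + C'\,\mathbb{E}[M^\eps]_t$, so the supremum term can be absorbed into the left-hand side.

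Finally, taking $\mathbb{E}\sup_{0\leq \theta \leq t}$ of the It\^o-type estimate, absorbing the $\tfrac{1}{4}\mathbb{E}\sup$ contribution, and using the uniform $L^1$ bounds on $\varphi^\eps, \psi^\eps$, one reaches an inequality of the form
$$\mathbb{E}\sup_{0\leq \theta \leq t}\norm{X^\eps_\theta}^2 \leq K_0 + \int_0^t \Phi^\eps(s)\,\mathbb{E}\sup_{0\leq r \leq s}\norm{X^\eps_r}^2\,ds,$$
with $\sup_{\eps>0}\int_0^T \Phi^\eps(s)\,ds < \infty$. Gronwall's lemma then yields the desired $\eps$-uniform bound \eqref{aprioristoch}, noting that every constant depends only on $N$, $C$, $M$, $T$, and $\mathbb{E}\norm{X_0}^2$.
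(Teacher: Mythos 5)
Your proof is correct and, in fact, goes further than the paper's own argument. The paper applies the It\^o-type inequality and then takes a \emph{pointwise} expectation of $\norm{X^\eps_t}^2$, under which the martingale term $\int_0^t\inp{X^\eps_{s-},dM^\eps_s}$ simply vanishes; the drift, compensator, and quadratic-variation terms are then controlled by Hypothesis 2(iii), Lemma~\ref{gint} and Gronwall. As written, that chain of estimates yields only $\sup_{t\le T}\sup_{\eps>0}\mathbb{E}\norm{X^\eps_t}^2<\infty$, with the supremum in $t$ outside the expectation, not the bound (\ref{aprioristoch}) with the supremum inside. The extra step you supply --- bounding $\mathbb{E}\sup_{\theta\le t}\bigl|\int_0^\theta\inp{X^\eps_{s-},dM^\eps_s}\bigr|$ by the Burkholder--Davis--Gundy inequality and then splitting with Young's inequality so that the resulting $\tfrac14\mathbb{E}\sup_{s\le t}\norm{X^\eps_s}^2$ can be absorbed into the left-hand side --- is precisely what is needed to move the supremum inside the expectation. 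Your observation that Lemma~\ref{lem1} is stated for a continuous process $X$ and therefore does not literally apply to the c\`adl\`ag $X^\eps$ is a legitimate caveat; BDG delivers the same estimate. (One should also localize with stopping times so that the quantity being absorbed is finite before Gronwall is invoked, but this is routine.) In short, you use the same core ingredients as the paper --- the It\^o-type inequality with $\lambda=0$, the decomposition $\eps N^{\eps^{-1}g^\eps}-\nu=\eps\tilde N^{\eps^{-1}g^\eps}+(g^\eps-1)\nu$, the uniform $L^1$ bounds from Lemma~\ref{gint}, and Gronwall --- but make explicit the maximal-function step that the paper's proof leaves out.
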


\begin{proof}
We write the It\^{o} type inequality for $X^{\eps}_t$ as
\begin{align}
\norm{X^{\eps}_t}^2 &\leq \norm{X_0}^2+2\int^t_0\inp{X^{\eps}_s,f(X^{\eps}_s)}ds+\int^t_0\int_{\mathbf{X}} \norm{\eps G(s,X^{\eps}_{s-},v)}^2 N^{\eps^{-1} g^{\eps}}(dvds)\nonumber\\
&\quad +2\int^t_0\int_{\mathbf{X}}\inp{ X^{\eps}_{s-},\eps G(s,X^{\eps}_{s-},v)}\left( N^{\eps^{-1} g^{\eps}}(dvds)-\nu(dv)ds\right).\nonumber
\end{align}
Hence by getting expectation, we have 
\begin{align}
\mathbb{E}\norm{X^{\eps}_t}^2 &\leq \mathbb{E}\norm{X_0}^2+C\int^t_0\mathbb{E}(1+\norm{X^{\epsilon}_s}^2)ds\nonumber\\
&\quad +\eps\mathbb{E}\int^t_0\int_{\mathbf{X}} \norm{G(s,X^{\eps}_{s},v)}^2 g^{\eps}\nu(dv)ds\nonumber\\
&\quad +2\mathbb{E}\int^t_0\int_{\mathbf{X}}\norm{X^{\eps}_s}\norm{G(s,X^{\eps}_{s},v)} (g^{\eps}-1)\nu(dv)ds.\label{ap}
\end{align}
For the third term we have 
\begin{align}
&\eps\mathbb{E}\int^t_0\int_{\mathbf{X}} \norm{G(s,X^{\eps}_{s},v)}^2 g^{\eps}\nu(dv)ds\nonumber\\
&\qquad \leq \eps\mathbb{E}\int^t_0(1+\norm{X^{\eps}_{s}})^2\int_{\mathbf{X}}\norm{G(s,v)}^2_0 g^{\eps}\nu(dv)ds.\label{aa}
\end{align}
Furthermore, for the last term in (\ref{ap}) we obtain
\begin{align}
&2\mathbb{E}\int^t_0\int_{\mathbf{X}}\norm{X^{\eps}_{s}}\norm{ G(s,X^{\eps}_{s},v)}(g^{\eps}-1)\nu(dv)ds\nonumber\\
&\quad \leq \mathbb{E}\int^t_0\int_{\mathbf{X}}\left( \norm{X^{\eps}_{s}}^2+\norm{ G(s,X^{\eps}_{s},v)}^2 \right)(g^{\eps}-1)\nu(dv)ds\nonumber\\
&\quad \leq \mathbb{E}\int^t_0\norm{X^{\eps}_{s}}^2\int_{\mathbf{X}}(g^{\eps}-1)\nu(dv)ds
+\mathbb{E}\int^t_0\int_{\mathbf{X}}\norm{ G(s,X^{\eps}_{s},v)}^2(g^{\eps}-1)\nu(dv)ds.\label{aaa}
\end{align}
The last integral can be estimated similar to (\ref{aa}). To estimate the first integral we use the assumption $L_T(g^{\epsilon})<N$ which yields $\int^t_0\int_{\mathbf{X}}(g^{\epsilon}-1)\nu(dv)ds<N$. So, using Lemma \ref{gint} and  Gronwall's lemma we can conclude the estimate (\ref{aprioristoch}). Note that our argument gives a uniform bound for all $\epsilon >0$. 
\end{proof}

To verify Hypothesis 1(ii), we must estimate $\norm{X^\eps-X^0}_\infty$ in distribution.
\begin{thm}\label{h3}
Let $\{g^\eps:\eps>0\} \subseteq \mathcal{U}^N$ for some
$N<\infty$. Moreover, assume that $g^\eps$ converges to $g^0$ in
distribution (as $S^N$-valued random variables), as $\eps\rightarrow
0$. Then
$$X^{\eps}=\mathcal{G}^\eps\para{\eps N^{\eps^{-1}g^{\eps}}}\rightarrow X^0=\mathcal{G}^0(g^0),$$
in distribution in the space $\mathcal{D}([0,T];H)$.
\begin{proof}
The proof is similar to that of Theorem $\ref{h2}$. The
differences are in some stochastic integrals for which we will
use Burkholder-type inequality to estimate them. Let, as before, $A_k$, $k\in \mathbb{N}$, be the Yosida approximation of the linear operator $A$. Since $A_k$ is bounded, the equation
$$dX^{\eps,k}_t=\para{A_kX^{\eps,k}_t+f(X^{\eps,k}_t)}dt+\int_{\mathbf{X}} G(t,X^{\eps,k}_t,v)\left(\eps N^{\eps^{-1}g^{\eps}}(dvdt)-\nu(dvdt)\right)$$
with initial value $X_0$ has a strong solution which can be represented in the form
\begin{align}
X^{\eps,k}_t=e^{(t-s)A_k}X_0&+\int^t_0e^{(t-s)A_k}f(X^{\eps,k}_s)ds\nonumber\\
&+\int^t_0\int_{\mathbf{X}} e^{(t-s)A_k}G(t,X^{\eps,k}_t,v)\left(\eps N^{\eps^{-1}g^{\eps}}(dvds)-\nu(dv)ds\right).\label{mildyosida}
\end{align}
By the definitions, we have
\begin{align}
&X^\eps_t-X^{\eps,k}_t=\left(S(t)-\emph{e}^{tA_k}\right) X_0\nonumber\\
&+\int^t_0\para{S(t-s)-\emph{e}^{(t-s)A_k}}f(X^\eps_s)ds\nonumber\\
&+\int^t_0\emph{e}^{(t-s)A_k}\para{f(X^\eps_s)-f(X^{\eps,k}_s)}ds\nonumber\\
&+\int^t_0\int_{\mathbf{X}}\para{S(t-s)-\emph{e}^{(t-s)A_k}} G(s,X^{\eps}_{s-},v)\left(\eps N^{\eps^{-1}g^{\eps}}(dvds)-\nu(dv)ds\right)\nonumber\\
&+\int^t_0\int_{\mathbf{X}}\emph{e}^{(t-s)A_k}\para{G(s,X^{\eps}_{s-},v)-G(s,X^{\eps,k}_s,v)}\left(\eps N^{\eps^{-1}g^{\eps}}(dvds)-\nu(dv)ds\right)\nonumber\\
&=:J^{k}_0+J^{\eps,k}_1(t)+J^{\eps,k}_2(t)+J^{\eps,k}_3(t)+J^{\eps,k}_4(t).\nonumber
\end{align}
From the Yosida approximation properties and square integrability of $X_0$, we get $\norm{J^{k}_0}\rightarrow 0$ in distribution. 
In estimating $J^{\eps,k}_3, J^{\eps,k}_4$ we apply the Burkholder-type inequality. So we have 
\begin{align}
\mathbb{E}\; \norm{J^{\eps,k}_3}^2_{\infty}&\leq \eps \mathbb{E}\int_{\mathbf{X}_T} \norm{G(t,X^\eps_t,v)}^2\abs{g^{\eps}-1}\nu(dv)dt\nonumber\\
&\leq \eps C \mathbb{E} (1+\norm{X^\eps}_{\infty})^2  \int_{\mathbf{X}_T}\norm{G(t,v)}^2_0\abs{g^{\eps}-1}\nu(dv)dt\nonumber\\
&\leq \eps C L_0\mathbb{E} (1+\norm{X^\eps}_{\infty})^2,\nonumber
\end{align}
in which $L_0=\sup_{g\in S^N}\int_{\mathbf{X}_T}\norm{G(t,v)}^2_0\abs{g-1}\nu(dv)dt$. 
Therefore, due to the a priori estimate (\ref{aprioristoch})  we can conclude that  $\norm{J^{\eps,k}_3}^2_{\infty}$ tends to zero in distribution, as $\eps\rightarrow 0$.

Similarly, we have for the  term $J^{\eps,k}_4$ 
\begin{align}
\mathbb{E} \norm{J^{\eps,k}_4}^2_{\infty}&\leq \eps C \mathbb{E}\int_{\mathbf{X}_T}\norm{G(t,X^{\eps}_t,v)-G(t,X^{\eps,k}_t,v)}^2\abs{g^{\eps}-1}\nu(dv)dt\nonumber\\
&\leq \eps C \mathbb{E}\left\{\norm{X^{\eps}-X^{\eps,k}}_{\infty}^2\int_{\mathbf{X}_T}\norm{G(t,v)-G(t,v)}^2_1\abs{g^{\eps}-1}\nu(dv)dt\right\}\nonumber\\
&\leq \eps CL_1\mathbb{E} \norm{X^{\eps}-X^{\eps,k}}_{\infty}^2,\nonumber
\end{align}
in which $L_1=\sup_{g\in S^N}\int_{\mathbf{X}_T}\norm{G(t,v)}^2_1\abs{g-1}\nu(dv)dt$. 
Again  the a priori estimate  implies that $\norm{J^{\eps,k}_4}^2_{\infty}$ tends to zero in distribution, as $\eps\rightarrow 0$.

By an easier argument than used in estimating
$\norm{I^{n,k}}_{\infty}$ and $\norm{J^{n,k}}_{\infty}$, in previous section, we can
conclude that
$\norm{J^{\eps,k}_1}_{\infty},\norm{J^{\eps,k}_2}_{\infty}\rightarrow 0$
in distribution, as $k\rightarrow\infty$. Hence, we have the
following convergence in distribution for any $\eps>0$
\begin{align}
\lim_{k\rightarrow\infty}\norm{X^{\eps}-X^{\eps,k}}_{\infty}=0.\label{36}
\end{align}

We now proceed in another direction and let $\eps\rightarrow 0$. Recall that 
\begin{align}
X^{0,k}_t=e^{(t-s)A_k}X_0 &+\int^t_0e^{(t-s)A_k}f(X^{0,k}_s)ds\nonumber\\
&\hspace{0.05cm}+\int^t_0\int_{\mathbf{X}} e^{(t-s)A_k}G(s,X^{0,k}_s,v)(g^0-1)\nu(dv)ds.\nonumber
\end{align}
It\^{o}'s formula for the strong solution yields 
\begin{align}
&\norm{X^{\eps,k}_t-X^{0,k}_t}^2 \nonumber\\
&\leq 2\int^t_0\inp{X^{\eps,k}_s-X^{0,k}_s,A_k\para{X^{\eps,k}_s-X^{0,k}_s}+f(X^{\eps,k}_s)-f(X^{0,k}_s)}ds\nonumber\\
&+2\int^t_0\int_{\mathbf{X}}\inp{X^{\eps,k}_s-X^{0,k}_s,G(s,X^{\eps,k}_s,v)-G(s,X^{0,k}_s,v)}\para{g^{\eps}-1}\nu(dv)ds\nonumber\\
&+2\int^t_0\int_{\mathbf{X}}\inp{X^{\eps,k}_s-X^{0,k}_s,G(s,X^{0,k}_s,v)}\para{g^{\eps}-g^0}\nu(dv)ds\nonumber\\
&+2\eps\int^t_0\int_{\mathbf{X}}\inp{X^{\eps,k}_{s_-}-X^{0,k}_{s_-},G(s,X^{\eps,k}_{s_-},v)}\para{N^{\eps^{-1}g^{\eps}}(dvds)-\eps^{-1} g^{\eps}\nu(dv)ds}\nonumber\\
&+\eps\int^t_0\int_{\mathbf{X}}\norm{G(s,X^{\eps,k}_s,v)}^2 g^{\eps}\nu(dv)ds \nonumber\\
&=:I^{\eps,k}_1(t)+I^{\eps,k}_2(t)+I^{\eps,k}_3(t)+I^{\eps,k}_4(t)+I^{\eps,k}_5(t).\label{12}
\end{align}

Since $A_k$ is negative definite, we have $I^{\eps,k}_1(t)\leq M\int^t_0\norm{X^{\eps,k}_s-X^{0,k}_s}^2ds$. For the term $I^{\eps,k}_2$, we write
\begin{align}
I^{\eps,k}_2(t)&\leq 2\int^t_0\norm{X^{\eps,k}_s-X^{0,k}_s}^2\int_{\mathbf{X}}\norm{G(s,v)}_1\para{g^{\eps}-1}\nu(dv)ds.\nonumber
\end{align}

We obtain the following estimate for the term $I^{\eps,k}_3$
\begin{align}
\mathbb{E} I^{\eps,k}_3(t)&\leq 2\mathbb{E} \int^t_0\int_{\mathbf{X}}\norm{X^{\eps,k}_s-X^{0,k}_s}(1+\norm{X^{0,k}_s})\norm{G(s,v)}_0\para{g^{\eps}-g^0}\nu(dv)ds\nonumber\\
&\leq 2\mathbb{E}\left\{\norm{X^{\eps,k}-X^{0,k}}_{\infty}(1+\norm{X^{0,k}}_{\infty})\int^t_0\int_{\mathbf{X}}\norm{G(s,v)}_0\para{g^{\eps}-g^0}\nu(dv)ds\right\}.\nonumber
\end{align}
From the proof of a priori estimate, we have $\sup_{k\in\mathbb{N}}\mathbb{E}\norm{X^{\eps,k}-X^{0,k}}_{\infty}(1+\norm{X^{0,k}}_{\infty})<\infty$. Moreover Lemma \ref{hcon} implies that the integral at the right hand side tends to zero in distribution. Therefore by a simple argument in probability theory we obtain the following convergence in distribution
$$\lim_{\eps\rightarrow 0}\sup_{k\in\mathbb{N}}\norm{I^{\eps,k}_3}_{\infty}=0.$$

Similarly, to estimate $I^{\eps,k}_4$ we write 
\begin{align}
I^{\eps,k}_4(t)&\leq 2\eps\int^t_0\int_{\mathbf{X}}\norm{X^{\eps,k}_{s_-}-X^{0,k}_{s_-}} \norm{G(s,X^{\eps,k}_s,v)}\para{N^{\eps^{-1}g^{\eps}}(dvds)-\eps^{-1} g^{\eps}\nu(dv)ds}.\nonumber
\end{align}
Applying Lemma \ref{lem1}, we obtain 
\begin{align}
\mathbb{E}\norm{I^{\eps,k}_4}_{\infty}\leq 3\eps\mathbb{E}\norm{X^{\eps,k}-X^{0,k}}^2_{\infty}+3\eps\mathbb{E}\int^T_0\int_{\mathbf{X}}(1+\norm{X^{\eps,k}}_{\infty})^2\norm{G(s,v)}^2_0 g^{\eps}\nu(dv)ds.\label{secondterm}
\end{align}
By a similar argument employed in estimating $J^{\eps,k}_3$, we can conclude that the second term tends to zero in distribution uniformly for $k\in\mathbb{N}$. clearly, due to the uniform a priori bound, we have this convergence for the first term, too. Therefore we have the following convergence in distribution
  $$\lim_{\eps\rightarrow 0}\sup_{k\in\mathbb{N}}\norm{I^{\eps,k}_4}_{\infty}=0.$$

Finally, the estimate obtained for the second term in (\ref{secondterm}) yields the corresponding convergence for the term $I^{\eps,k}_5$  

  $$\lim_{\eps\rightarrow 0}\sup_{k\in\mathbb{N}}\norm{I^{\eps,k}_5}_{\infty}=0.$$

Now regarding the estimates for $I^{\eps,1}, I^{\eps,2}$ and the convergences obtained for  $I^{\eps,k}_j, 3\leq j\leq 5$, we can apply Gronwall's lemma to $\norm{X^{\eps,k}_t-X^{0,k}_t}^2$ to conclude the following convergence in distribution
\begin{align}
\lim_{\eps\rightarrow 0}\sup\limits_{k\in \mathbb{N}}\norm{X^{\eps,k}-X^{0,k}}_\infty=0.\nonumber
\end{align}
This together with (\ref{36}) imply the desired convergence in distribution 
$$\lim_{\eps\rightarrow 0}\norm{X^{\eps}-X^0}_{\infty}=0.$$

\end{proof}
\end{thm}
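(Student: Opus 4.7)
The plan is to mirror the architecture of the proof of Theorem \ref{h2}, with two crucial additions to cope with the stochastic noise: the Burkholder-type inequality for controlling stochastic convolutions uniformly in time, and a reduction of the convergence-in-distribution hypothesis $g^\eps\to g^0$ to pathwise statements (since $S^N$ is compact, a Skorokhod coupling is available). I introduce the Yosida approximation $A_k=A(I-k^{-1}A)^{-1}$, and let $X^{\eps,k}$ be the strong solution of the Yosida-approximated stochastic equation driven by $g^\eps$, and $X^{0,k}$ the strong solution of the corresponding deterministic approximated skeleton driven by $g^0$. Then I split
\begin{align*}
X^\eps - X^0 = (X^\eps - X^{\eps,k}) + (X^{\eps,k} - X^{0,k}) + (X^{0,k} - X^0),
\end{align*}
where the last term tends to zero by the argument of Theorem \ref{h2}, while the remaining two will be controlled by a $3\eps$-argument: $\norm{X^\eps-X^{\eps,k}}_\infty$ must be small in distribution as $k\to\infty$ uniformly in $\eps$, and $\norm{X^{\eps,k}-X^{0,k}}_\infty$ must be small in distribution as $\eps\to 0$ uniformly in $k$.

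For $X^\eps - X^{\eps,k}$ I would expand the difference as a sum of five pieces: a $(S(t)-e^{tA_k})X_0$ term, two deterministic convolutions in $f$, and two stochastic convolutions in $G$ (once with $S(t-s)-e^{(t-s)A_k}$ sandwiched in, once with $e^{(t-s)A_k}$). The deterministic pieces are estimated exactly as $I^{n,k}$ and $J^{n,k}$ in the proof of Theorem \ref{h2}, using the contraction property of $e^{tA_k}$, the strong convergence $e^{tA_k}h\to S(t)h$, the linear growth of $f$, and the a priori bound (\ref{aprioristoch}). The two stochastic pieces are bounded using the Burkholder-type inequality by quantities of the form $\eps\,C\,\mathbb{E}(1+\norm{X^\eps}_\infty)^2$ times $\sup_{g\in S^N}\int\norm{G(t,v)}_0^2|g-1|\,\nu(dv)dt$ (or with $\norm{\cdot}_1$), both of which are finite by Lemma \ref{gint} and Remark \ref{gintrem}; hence these terms vanish as $\eps\to 0$, and a fortiori as $k\to\infty$ uniformly in $\eps$, giving (\ref{36}).

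For the core term $X^{\eps,k}-X^{0,k}$, since $A_k$ is bounded both processes are classical semimartingales and I can apply It\^{o}'s formula to $\norm{X^{\eps,k}_t-X^{0,k}_t}^2$. This produces five terms $I^{\eps,k}_1,\ldots,I^{\eps,k}_5$: a dissipative/monotone term bounded using Hypothesis 2(iv) and the contractivity of $A_k$; a Lipschitz-noise term bounded by $\int\norm{G(s,v)}_1(g^\eps-1)\nu(dv)ds$ times $\norm{X^{\eps,k}_s-X^{0,k}_s}^2$; a forcing term involving $(g^\eps-g^0)$, which by Lemma \ref{hcon} and the uniform a priori bound on $X^{0,k}$ tends to zero in distribution as $\eps\to 0$ uniformly in $k$ (after a Skorokhod coupling for the random $g^\eps$); a compensated stochastic-integral term handled by Lemma \ref{lem1}, whose Young/Burkholder split absorbs a small multiple of $\sup\norm{X^{\eps,k}-X^{0,k}}_\infty^2$ into the left-hand side and leaves an $\eps$-small tail controlled via $\norm{G(s,v)}_0^2$; and the quadratic-variation term bounded analogously using Hypothesis 2(v) and the a priori estimate. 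Gronwall's lemma then gives $\sup_{k\in\mathbb{N}}\norm{X^{\eps,k}-X^{0,k}}_\infty\to 0$ in distribution as $\eps\to 0$.

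The main obstacle is to make every estimate uniform in the Yosida parameter $k$, so that the $3\eps$-argument actually closes when we combine the three pieces. This forces me to establish a priori bounds on $X^{\eps,k}$ and on $\norm{X^{\eps,k}-X^\eps}_\infty$ that are uniform in both $\eps$ and $k$, which is achievable because the contractivity assumption on $S(t)$ passes to $e^{tA_k}$ and the It\^{o}-type inequality produces constants depending only on the data in Hypotheses 2 and 3. A secondary technical subtlety is that $g^\eps\to g^0$ only in distribution: applying Lemma \ref{hcon} pointwise requires working on a Skorokhod-coupled space where the convergence is almost sure, and then transferring the resulting convergence in probability back to convergence in distribution through the Portmanteau theorem, at which point combining with (\ref{36}) concludes the proof.
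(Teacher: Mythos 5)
Your proposal reproduces the paper's argument essentially step for step: the same Yosida truncation $A_k$, the same three-term telescoping $X^\eps-X^0=(X^\eps-X^{\eps,k})+(X^{\eps,k}-X^{0,k})+(X^{0,k}-X^0)$ with the last term handled by the convergence already established in Theorem \ref{h2}, the same five-piece expansion of $X^\eps-X^{\eps,k}$ bounded via the Burkholder-type inequality and Lemma \ref{gint}, and the same It\^{o}-formula decomposition of $\norm{X^{\eps,k}-X^{0,k}}^2$ into five terms treated with Hypothesis 2(iv)--(v), Lemma \ref{lem1}, Lemma \ref{hcon}, and Gronwall. The only genuine point of divergence is that you make the handling of the random controls explicit through a Skorokhod coupling (justified by compactness of $S^N$) before applying Lemma \ref{hcon}, and you absorb the $\eps\mathbb{E}\norm{X^{\eps,k}-X^{0,k}}_\infty^2$ term into the left-hand side, whereas the paper simply invokes the uniform a priori bound to let that term vanish with $\eps$; both are minor technical variants of the same argument, and your version is, if anything, slightly more explicit about where the ``convergence in distribution'' statements actually live.
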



\section{\textbf{Examples}}
\subsection{\textbf{Stochastic Heat Equation}}
As a simple example, we consider the stochastic heat equation which is the most typical equation studied in different approaches in SPDE. The LDP of this equation with Wiener noise and monotone drift has been studied in \cite{kn:DZ}.

Consider the following initial-boundary value problem where $D$ is a bounded domain with smooth boundary in $\mathbb{R}^d$.  

\begin{align}
\begin{array}{c}
  \frac{\partial u}{\partial t}=\Delta u+f(u(t,x)), \quad (t,x)\in [0,\infty)\times D,\label{25}\\
  u(t,x)=0, \quad \forall (t,x)\in [0,\infty)\times \partial D,\\
  u(0,x)=u_0(x) \quad \forall x\in D,
\end{array}
\end{align}
where, $u_0\in L^2(D)$.

Supposing $H=L^2(D)$ with the norm $\norm{\cdot}_{L^2(D)}$ and $A=\Delta u$ with the domain
$$D(A)=\left\{u\in L^2(D)\; :\; u', u''\in L^2(D), \; u(x)=0\; \forall x\in \partial D \right\},$$
the operator $A:D(A)\subset H\rightarrow H$ generates a
strongly continuous semigroup $S(t), t\geq 0$ on $H$. Let $\mathbf{X}=\Real$ and assume that the functions $G_i:[0,\infty)\times H\times\mathbf{X}\rightarrow \Real, 1\leq i\leq l$ satisfy Hypothesis 3. Furthermore, define the diffusion coefficient $g_i:[0,\infty)\times H\rightarrow\mathbb{R}$ as 
$$g_i(t,u):=\int_{\mathbf{X}}G_i(t,u,v)\tilde{N}_i(dv), \qquad i=1,\ldots,l$$
in which $\tilde{N}_i, 1\leq i\leq l$ are independent Poisson random measures.

We assume the following conditions on the diffusion coefficient $g_i$ and drift coefficient  $f:H\rightarrow \Real.$\\

\textbf{Hypothesis 4}
\begin{description}
  \item (i) There exist a function $a\in L^2(D)$ and a constant $C>0$ such that 
$$\norm{f(u)}\leq a+C\norm{u}, \quad \forall \; u\in H;$$
  \item (ii) $f$ is demicontinuous and $-f$ is uniformly semimonotone with parameter M, i.e.
$$\inp{f(y)-f(x),y-x}\leq M \norm{y-x}, \quad \forall x, y\in H.$$
  \item (iii) $g_i(t,.)$, for each $0\leq i\leq l,$ is uniformly Lipschitz with Lipschitz constant $M>0$, i.e.
$$\norm{g_i(t,y)-g_i(t,x)}\leq M\norm{y-x}, \quad \forall x, y\in H;$$
\end{description}

The stochastic evolution Equation  with L\'{e}vy noise corresponding to Equation (\ref{25}) can be written as
\begin{align}
du_t=Au_t dt+f(u_t)dt+\sum^l_{i=1}\int_{\mathbf{X}}G_i(t,u_t,v)\tilde{N}_i(dvdt), \quad u(0)=u_0.\label{heatlevy}
\end{align}
Now, according to Theorem \ref{mainthm} we obtain the LDP for the mild solution of Equation (\ref{heatlevy}).


\subsection{\textbf{Semilinear Hyperbolic Systems}}
Studying the hyperbolic systems of the following type is a particular feature of the semigroup approach. 
\begin{align}
\begin{array}{c}
  \frac{\partial u}{\partial t}=\sum^n_{i=1} a_i(x)\frac{\partial u}{\partial x_i}+b(x)u+f(u), \\
  u(0,x)=u_0, \quad u_0(x)\in L^2(\Real^n;\Real^k), \quad x\in \Real^n,\label{26}
\end{array}
\end{align}
where $u=(u_1,\ldots,u_k)^t$ is the unknown vector, $f:H\rightarrow \Real^k$ and the coefficients  $a_i(x), b(x)$ are square matrices of order $k$, for every $i=1,\ldots,n$ and $x\in \Real^n$. Let $H=L^2(\Real^n;\Real^k),\; \mathbf{X}=\Real^m$ be the state space of solution and noise, respectively. Furthermore, we define the diffusion coefficient $g:[0,\infty)\times H\rightarrow \Real^k $ similar to the diffusion $g_i$ defined in previous example. Also, we assume assumptions similar to Hypothesis 4 on $f, g$ and the following conditions on $a_i, b$.\\

\textbf{Hypothesis 5}
\begin{description}
  \item (i) The matrices $a_i(x)$, $i=1,\ldots,n$ and $x\in \Real^n$, are symmetric;
  \item (ii) The components of $a_i$, $i=1,\ldots,n$, their first order derivatives, and the function $b$ are bounded continuous;
\end{description}

We define the closed unbounded operator $A$ on $H$ by
$$Au=\sum^n_{i=1} a_i(x)\frac{\partial u}{\partial x_i}+b(x)u, \quad \forall u\in D(A)\subset H.$$
According to \cite[Theorem 3.51]{kn:Ta}, $A$ is the generator of a
$C_0$-semigroup on $H$. So, the Equation $(\ref{26})$ together with  L\'{e}vy noise,  can be represented as the following stochastic semilinear evolution equation
\begin{align}
du_t=Au_tdt+f(u_t)dt+\int_{\mathbf{X}}G(t,u_t,v)\tilde{N}(dvdt), \quad u(0)=u_0.\label{Hypermain}
\end{align}

Now, due to Hypotheses 4 and 5, it is clear that the conditions of Hypothesis 2 are being fulfilled. 
Therefore, we claim that the mild solution of Equation (\ref{Hypermain}) satisfies LDP.

\subsection{\textbf{SPDE's Arising in HJM Models of Interest Rate}}
The main assumption in modelling the interest rate is its fluctuations around a deterministic mean value. Therefore,  large deviation principle can be an interesting issue for those models. A great achievement in modelling interest rate is the HJM model, in which the forward rates are being calibrated and expressed by the mild solution of an SPDE. The forward rate $f_t(x)$ at time $t\geq 0$ for time to maturity $x\geq 0$ is given by
$$f_t(x)=-\frac{\partial}{\partial x}\log(P(t,t+x)),$$
where $P(t,t+x)$ is the price at time $t\geq 0$ of a zero-coupon bond with maturity $T=t+x$.

Empirical studies have shown that term structure models based on Brownian motion provide a poor fit to market data. Bj\"{o}rk et al. \cite{kn:BDKR} and Eberlein \cite{kn:ER} proposed a process with jump instead of the Wiener process. 

Considering the L\'{e}vy noise, we can write the forward rate under the risk neutral measure as the mild solution of the following SPDE  
\begin{align}
df_t(x)=\para{\frac{\partial}{\partial x}f_t(x)+a_t(x)}dt+\sum^{\infty}_{i=1}\sigma^i_t(x) dZ^i_t,\label{hjmspde}
\end{align}
where $Z^i_t, i\in\mathbb{N}$ are L\'{e}vy processes on $\mathbb{R}$ and the drift $a$ is derived from $\sigma$ by the HJM no-arbitrage condition. See \cite{kn:JZ} for a more general setting with  infinite dimensional L\'{e}vy noise. To express the HJM no-arbitrage condition, suppose the cumulant generating functions  
$$\Psi^i(u)=\ln \mathbb{E}[exp(uZ^i_1)], \qquad i\in\mathbb{N},$$
exist on the intervals $[a_i, b_i]$ containing zero as an inner point.
Then the drift $a$ is defined by 
$$a_t(x)=-\sum^{\infty}_{i=1}\sigma^i_t(x)\Psi^{\prime}\para{-\int^x_0\sigma^i_t(s)ds}.$$
Now we can write the mild solution of (\ref{hjmspde}) as 
$$f_t(x)=S(t)f_0(x)+\int^t_0S(t-s)a_s(x)ds+\sum^{\infty}_{i=1}\int^t_0S(t-s)\sigma^i_s(x)dX^i_s, $$
where the semigroup $S(t)$ is the shift operator $S(t)f(x)=f(x+t)$ and is generated by the differential operator $A f(x)=\frac{\partial}{\partial x}f(x)$. 

It is well known that the Lipschitz condition on diffusion $\sigma$ implies the Lipschitz condition on the drift $a$, ( cf. \cite{kn:FT}). So, we can assume that the coefficients of Equation (\ref{hjmspde}) satisfy Hypothesis 2. This implies that the HJM model of interest rate with multiplicative L\'{e}vy noise possesses the large deviation property. 

We should notice that we can consider a more general diffusion coefficient that varies on the state space $\mathbf{X}$, which in that case it must satisfies Hypothesis 3, too.

\subsection{Stochastic Delay Differential  Equations}
Peszat and Zabczyk \cite{kn:PZ} have studied the existence and uniqueness of the following stochastic delay differential equation with  Lipschitz coefficients. Moreover, the equation with  monotone drift coefficient has been studied  in \cite{kn:SZ}. 
\begin{align}
dX(t)=&\left(\int^0_h X(t+\theta)\mu (d\theta)\right)dt+f(X(t))dt+g(X(t))dZ_t\nonumber\\
&\qquad X(\theta)=\phi(\theta), \quad \theta\in (-h,0].\label{delayeq}
\end{align}
where $h>0$, $\mu$ is a finite variation measure on $(-h,0]$, $Z$ is a square integrable L\'{e}vy process in $\mathbb{R}^m$. Moreover  $f:\mathbb{R}^n\rightarrow \mathbb{R}^n$ and $g:\mathbb{R}^n\rightarrow L(\mathbb{R}^m, \mathbb{R}^n)$ are monotone and Lipschitz  maps, respectively. We explore the solution in the space $H=\left(L^2((-h,0);\mathbb{R}^n), \mathbb{R}^n \right)$. 

Consider the operator 
$$A(u, v)=(\int^0_{-h} u(\xi)\mu(d\xi), u(0)),$$
where the domain of $A$ is equal to 
$$D(A)=\{(u, v): u\in W^{2,2}((-h,0);\mathbb{R}^n), v=u(0)\}.$$
Defining $F(u, v)=(f(v), 0)$ and $G(u, v)=(g(v),0)$ and assuming that $f$ has linear growth, we can easily check that $F$ and $G$ satisfy Hypothesis 2(ii, iv, v).

Therefore, letting $X(t)=(x_t, x(t))\in H$, where $x_t\in L^2((-h,0);\mathbb{R}^n)$ and  $$x_t(\theta)=x(t+\theta),\qquad \theta\in (-h,0),$$ 
we can write Equation (\ref{delayeq}) as 
\begin{align}
dX_t=&[AX_t+F(X_t)]dt+\int_{\mathbf{X}}G(X_t,v)\tilde{N}(dvdt),\nonumber\\
&\qquad X(0)=(\phi, \phi(0)).\nonumber
\end{align}
This equation coincides  with the framework of Equation \ref{mainlevyeq}. So, we can claim the LDP for stochastic delay differential equation with L\'{e}vy noise.



\emph{Department of Mathemats}\\
\emph{Institute for Advanced Studies in Basic Sciences}\\
\emph{P.O. Box 45195-1159}\\
\emph{Zanjan, Iran}\\
\emph{e-mail: dadashi@iasbs.ac.ir}\\

\end{document}